\newtheorem{thm}{Theorem}[section]
\newtheorem{cor}[thm]{Corollary}
\newtheorem{lem}[thm]{Lemma}
\newtheorem{prop}[thm]{Proposition}
\theoremstyle{definition}
\newtheorem{defn}{Definition}[section]
\theoremstyle{remark}
\newtheorem{rem}{Remark}[section]
\newcommand{\AAA}{\mathcal{A}}
\newcommand{\BB}{\mathcal{B}}
\newcommand{\CC}{\mathcal{C}}
\newcommand{\DD}{\mathcal{D}}
\newcommand{\EE}{\mathcal{E}}
\newcommand{\FF}{\mathcal{F}}
\newcommand{\GG}{\mathcal{G}}
\newcommand{\HH}{\mathcal{H}}
\newcommand{\II}{\mathcal{I}}
\newcommand{\JJ}{\mathcal{J}}
\newcommand{\KK}{\mathcal{K}}
\newcommand{\LL}{\mathcal{L}}
\newcommand{\MM}{\mathcal{M}}
\newcommand{\NN}{\mathcal{N}}
\newcommand{\OO}{\mathcal{O}}
\newcommand{\PP}{\mathcal{P}}
\newcommand{\SSS}{\mathcal{S}}
\newcommand{\TT}{\mathcal{T}}
\newcommand{\UU}{\mathcal{U}}
\newcommand{\XX}{\mathcal{X}}
\newcommand{\YY}{\mathcal{Y}}
\newcommand{\Pone}{\mathbb{P}^1}
\newcommand{\Flags}[1]{\textnormal{Fl}(#1)^s}
\newcommand{\cell}{\Omega^{\circ}}
\newcommand{\Hom}[5]{\textnormal{Hom}_{\mathcal{#1}}(#2,#3,\mathcal{#4},\mathcal{#5})}
\newcommand{\UnivInt}[1]{\textbf{U}_{\mathcal{#1}}}
\newcommand{\UnivHom}[2]{\textbf{H}_{\mathcal{#1},\mathcal{#2}}}
\newcommand{\UnivPairedInt}[2]{\textbf{U}_{\mathcal{#1},\mathcal{#2}}}
\newcommand{\UnivPairedHom}[3]{\textbf{H}_{\mathcal{#1},\mathcal{#2},\mathcal{#3}}}
\newcommand{\LR}{c_{\lambda\mu}^{\nu}}
\newcommand{\StretchedLR}{c_{N\lambda N\mu}^{N\nu}}
\newcommand{\SL}{\textnormal{SL}_r}
\begin{document}
\title{Geometric Proof of a Conjecture of King, Tollu, and Toumazet}
\author{Cass Sherman}
\address{Department of Mathematics, University of North Carolina at Chapel Hill, Phillips Hall, Chapel Hill, NC 27599}
\email{cas1987@email.unc.edu}
\begin{abstract}
King, Tollu, and Toumazet in  \cite{KTT} conjectured that stretching the parameters of a Littlewood-Richardson coefficient of value $2$ by a factor of $N$ would result in a coefficient of value $N+1$. We prove a slight generalization of this by using geometric methods and Schubert calculus.
\end{abstract}
\maketitle

\setcounter{section}{-1}

\begin{section}{Introduction}\label{Intro}
Given Young diagrams $\lambda$, $\mu$, $\nu$ with at most $r$ rows, the associated Littlewood-Richardson number $\LR$ computes the dimension of the space of $\SL$ invariants of the tensor product $V_\lambda\otimes V_\mu\otimes V_{\nu}^*$, where as usual $V_\lambda$ denotes the irreducible polynomial representation of $\textnormal{GL}_r$ corresponding to $\lambda$. Given a whole number $N$, each row of the Young diagrams $\lambda$, $\mu$, $\nu$ can be stretched by a factor of $N$ (so e.g. if $N=2$, each row becomes twice as long, etc) and one may ask how does the number $P(N)=\StretchedLR$ change with $N$? Fulton conjectured (unpublished) and Knutson, Tao, and Woodward later proved \cite{KTW2} that if $P(1)=1$, then $P(N)=1$ for all $N$. This fact is related to irredundancy of a certain set of inequalities appearing in Horn's conjecture \cite{Bel04}.

A natural next question would be what if $P(1)=2$? It was given a correct conjectural answer of $P(N)=N+1$ by King, Tollu, and Toumazet in \cite{KTT} and was proven by Ikenmeyer in \cite{Ikenmeyer}. Ikenmeyer interprets $\LR$ as the cardinality of the set of integral hive flows on the honeycomb graph of $r$ with borders prescribed by $\lambda$, $\mu$, $\nu$. He then uses combinatorial and algorithmic techniques to count the hive flows and arrives at the conjectured answer. We will not pursue his methods, although he thinks that they should generalize, e.g. to the case $P(1)=3$. 

Instead we will prove the conjecture using the geometric methods established by Belkale \cite{Bel06,Bel07}. For this, it is helpful to recast the question as follows. Let $s\geq3$, $n>r$ be integers, and let $\lambda^1$,..., $\lambda^s$ be Young diagrams fitting in an $r$ by $n-r$ box (equivalently, weights of $G:=\SL$ of level $n-r$). Then,

\begin{thm}\label{MainTheorem}
Suppose $\sum_{p=1}^s|\lambda^p|=r(n-r)$ (the ``codimension condition''). If
\begin{equation*}
\dim(V_{\lambda^1}\otimes...\otimes V_{\lambda^s})^G=2
\end{equation*}
then
\begin{equation*}
\dim(V_{N\lambda^1}\otimes...\otimes V_{N\lambda^s})^G=N+1
\end{equation*}
for all integers $N\geq1$
\end{thm}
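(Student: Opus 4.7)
My plan is to translate the assertion into a question about the graded section ring of a line bundle on a GIT quotient, then apply Belkale's Schubert--geometric framework. By Borel--Weil,
\[
(V_{N\lambda^1}\otimes\cdots\otimes V_{N\lambda^s})^G \;=\; H^0\bigl((G/B)^s,\,\LL_{\lambda}^N\bigr)^G \;=\; H^0(X,\bar{\LL}^N),
\]
where $X := (G/B)^s /\!\!/_{\LL_\lambda} G$ is the GIT quotient and $\bar{\LL}$ is the descended polarization. The target value $P(N)=N+1$ is precisely the Hilbert polynomial of $(\Pone,\OO(1))$, so the goal is to prove that the graded section ring $R := \bigoplus_{N\geq 0} H^0(X,\bar{\LL}^N)$ is a polynomial algebra on two generators of degree one. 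Under the codimension hypothesis, Belkale's dictionary \cite{Bel06,Bel07} identifies $P(1)$ with the Schubert intersection count $\#\bigcap_p \Omega_{\lambda^p}(F_\bullet^p) \subset \mathrm{Gr}(r,n)$ at generic flags, producing two distinguished $r$-planes $V_1,V_2$ that anchor the geometry.

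The lower bound $P(N) \geq N+1$ follows from a transcendence argument. Choose a basis $v_1,v_2$ of the two-dimensional space $H^0(X,\bar{\LL})$. Since $\mathbb{C}$ is algebraically closed in the function field $K(X)$ of the irreducible GIT quotient, the nonconstant rational function $v_1/v_2$ is transcendental over $\mathbb{C}$, and so the $N+1$ monomials $\{v_1^a v_2^b : a+b = N\}$ are $\mathbb{C}$-linearly independent elements of $H^0(X,\bar{\LL}^N)$.

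The substantive content is the matching upper bound $P(N) \leq N+1$, equivalent to surjectivity for every $N$ of the multiplication map $\mathrm{Sym}^N H^0(X,\bar{\LL}) \to H^0(X,\bar{\LL}^N)$. I plan to prove this geometrically using Belkale's universal Schubert intersection $\KI$: construct a natural rational curve $\Pone \subset X$ joining $V_1$ and $V_2$ as a degeneration locus, show that $\bar{\LL}|_{\Pone} \cong \OO(1)$, and apply a rigidity/degeneration argument to conclude that every higher-degree invariant decomposes as a polynomial in the two degree-one generators. Concretely, I would specialize the $s$-tuple of flags so that the Schubert intersection scheme collapses in a controlled way onto this curve, then descend the conclusion back to the generic configuration. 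The main obstacle is this rigidity step --- the two-point analogue of the rigidity underlying the $P(1)=1$ (Fulton) case --- which requires a sufficiently fine analysis of the 2-point Schubert geometry to exclude hidden generators of $R$ in higher degree, together with enough cohomological vanishing to ensure $P(N) = \dim H^0$.
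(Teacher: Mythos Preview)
Your GIT translation and the lower bound $P(N)\ge N+1$ are fine and match the paper's setup. The real issue is your plan for the upper bound, which has two concrete gaps.

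First, the two distinguished $r$-planes $V_1,V_2$ you invoke live in $\mathrm{Gr}(r,n)$: they are the two points of the generic Schubert intersection $\bigcap_p\Omega_{\lambda^p}(E^p_\bullet)$. The GIT quotient $X$, by contrast, is a quotient of $\Flags{V}$ for the $r$-dimensional space $V$. These are different objects, and there is no evident way to ``join $V_1$ and $V_2$ by a $\Pone$ in $X$.'' Belkale's dictionary relates the two via the theta sections $\theta(Q,\GG)$, whose vanishing loci in $\Flags{V}$ are $\{\FF:\Hom{I}{V}{Q}{F}{G}\neq 0\}$, not via points of $X$ indexed by $V_1,V_2$.

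Second, even granting a rational curve $C\cong\Pone\subset X$ with $\bar{\LL}|_C\cong\OO(1)$, restriction gives a map $H^0(X,\bar{\LL}^N)\to H^0(C,\OO(N))$ which need not be injective when $\dim X>1$. So your construction cannot bound $h^0(X,\bar{\LL}^N)$ from above unless you already know $\dim X=1$. Your proposed ``rigidity/degeneration'' step is exactly where the entire difficulty lies, and nothing in the sketch addresses it.

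The paper proceeds differently: it proves $\dim X=1$ directly, by contradiction. If $\dim X\ge 2$, the ample line bundle $\bar{\LL}$ with $h^0=2$ has nonempty base locus; lift an irreducible component to $Z\subset\Flags{V}$. For semistable $\FF\in Z$ and general $\GG$, one has $\Hom{I}{V}{Q}{F}{G}\neq 0$. Now the hypothesis $h^0=2$ is used to produce \emph{two} independent theta sections, hence a general pair $(\phi,\phi')$ with kernels $S,S'$ and $T=S\cap S'$ in controlled Schubert positions $\KK,\JJ$. A Schofield-style ``trading'' identity (Proposition~\ref{H1Prop}) transfers the $H^1$ computation from $(V,\FF)$ to $(S,\FF(S))$; the induced flags on $S$ are not generic in $\Flags{S}$, but they are generic in the locus $Z_{S,T,\NN}$ of flags admitting a subspace in position $\NN$. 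A tailored Horn-type vanishing (Proposition~\ref{NewHorn}) for such flags, fed by the semistability inequalities, forces this $H^1$ to vanish, contradicting $\Hom{I}{V}{Q}{F}{G}\neq 0$. Once $\dim X=1$, L\"uroth (using that $X$ is normal and dominated by a rational variety) gives $X\cong\Pone$ and $\bar{\LL}\cong\OO(1)$, hence $P(N)=N+1$ on the nose---no separate upper and lower bounds are needed.

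In short: the crux is a dimension bound on the moduli space, not a section-counting argument, and the mechanism that makes $h^0=2$ usable is the pair of kernels $(S,S')$ it produces, not a pair of points on a curve.
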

\begin{rem}
This generalizes the conjecture of \cite{KTT} proven in \cite{Ikenmeyer} to an arbitrary number of weights. Indeed, suppose $\LR=2$. Then in particular $|\lambda|+|\mu|=|\nu|$, for this is the case whenever $\LR\neq0$. Choose $n$ large enough that $\lambda$, $\mu$, $\nu$ each fit in an $r\times (n-r)$ box. Let $\nu^{\vee}$ be the Young diagram with $(\nu^{\vee})_a=n-r-\nu_{r-a+1}$. One verifies that $V_{N\nu^{\vee}} \cong V_{N\nu}^*$ as $\textnormal{GL}_r$ representations for any $N\geq1$, and that $|\lambda|+|\mu|+|\nu^\vee|=r(n-r)$. Thus, by Theorem \ref{MainTheorem} with $s=3$, we have $N+1=(V_{N\lambda}\otimes V_{N\mu}\otimes V_{N\nu^\vee})^G=\StretchedLR$. $\square$
\end{rem}

To prove Theorem \ref{MainTheorem}, we further translate the question into one of the size of a moduli space $\MM$, which can be described as the $s$-fold product of the space of complete flags on an $r$-dimensional vector space $V$ modulo an equivalence relation - the theorem holds if and only if $\MM$ is 1-dimensional (see Section \ref{ConjectureTranslated}). This is the technique used by Belkale \cite{Bel07} to prove Fulton's conjecture. The moduli space under consideration has an ample line bundle $\LL$ with the property that the global sections of $\LL^{\otimes N}$ can be identified with the $G$ invariants of $V_{N\lambda^1}\otimes...\otimes V_{N\lambda^s}$ (really its dual). The idea is that if $\MM$ has dimension exceeding 1, then $\LL$ will have a nonempty base locus $Z$. From the properties of $\LL$, if $\FF\in\MM$ lies in $Z$, then a certain vector space $H$ depending on $\FF$ will be nonzero. However, this $H$ must be 0 whenever $\FF$ consists of ``general'' flags. While $\FF$ itself cannot be taken to be general (for it must lie in $Z$), we can ``trade'' $\FF$ for the flags induced by $\FF$ on a subspace of $V$. These flags will be general enough to allow us to conclude that $H$ must have been 0, a contradiction. That is, $\MM$ must have had dimension 1 all along. The ``trading'' process derives from techniques of Schofield \cite{Schofield} (see e.g. his Theorem 5.2), modified so that his Hom and Ext become our $H^0$ and $H^1$ of certain two-step complexes.

The author wishes to acknowledge many useful discussions with his thesis advisor P. Belkale. In particular, I would like to thank him for pointing out the technique of Schofield in connection to this problem and for showing me how to correct a significant error in earlier versions of this paper.

\begin{subsection}{Notation}\label{Notation}
Throughout $k$ will be an algebraically closed field of characteristic 0. The term ``vector space'' should be understood to mean finite dimensional over $k$. In particular, $V$ will have dimension $r$, $M$ will have dimension $m$, $Q$ will have dimension $n-r$, and $W\cong M\oplus Q$ will of course have dimension $n-r+m$. 

The quantity $[n]$ for a positive integer $n$ will denote the set of integers $\{1,...,n\}$. Italicized capital letters will denote \textit{index sets}. An \textit{index set} in $[n]$ is a subset of $[n]$ which is written in ascending order. If the index set $I$ in $[n]$ has cardinality $r$, we will say that $I\in\binom{[n]}{r}$. It is typical to associate to an index set $I\in\binom{[n]}{r}$ a Young diagram $\lambda(I)$, whose $a$th row is given by the equation $\lambda_a=n-r+a-I_a$. Finally, the lowercase letter $s$ will always denote a fixed positive integer greater than or equal to $3$.
\end{subsection}

\end{section}

\begin{section}{Preliminaries}\label{Preliminaries}

\begin{subsection}{Schubert Calculus}\label{Schubert Calculus}
The Grassmannian $\textnormal{Gr}(m,W)$ of $m$-dimensional subspaces of $W$ is a smooth, projective variety of dimension $m(n-r)$. It has distinguished subvarieties called Schubert varieties, each of which corresponds to a choice of full flag $E_{\bullet}$ on $W$ and a choice of index set $H\in\binom{[n-r+m]}{m}$. Explicitly, for such an $E_{\bullet}$ and $H$, the Schubert variety is defined as
\begin{equation*}
\Omega_H(E_{\bullet})=\{M\in\textnormal{Gr}(m,W)|\dim(M\cap E_a)\geq b \textnormal{ whenever }H_b\leq a<H_{b+1},\textnormal{ }b=1,...,m\}.
\end{equation*}
It has codimension $\sum_{a=1}^{r}n-r+a-H_a$. Each Schubert variety has a distinguished Zariski open subset, isomorphic to affine space, called the Schubert cell. It is defined as
\begin{equation*}
\cell_H(E_{\bullet})=\{M\in\textnormal{Gr}(m,W)|\dim(M\cap E_a)=b \textnormal{ iff } H_b\leq a < H_{b+1},\textnormal{ }b=1,...,m\}
\end{equation*}
For fixed $E_{\bullet}$, the Schubert cells over all choices of $H$ disjointly cover the Grassmannian. As a result of this cell decomposition, the classes $\omega_H$ of the Schubert varieties form an additive basis for the integral cohomology ring $H^*(\textnormal{Gr}(m,W),\mathbb{Z})$.

We will be interested mainly in the intersections of $s$-many Schubert cells. Let $\EE \in \Flags{W}$ have entries $E^p_{\bullet}$ for $p=1,...,s$. Given an $s$-tuple of index sets $\HH \in \binom{[n-r+m]}{m}^s$, we make the notational convention
\begin{equation*}
\cell_\HH(\EE):=\bigcap_{p=1}^s\cell_{H^p}(E^j_{\bullet})
\end{equation*}
and correspondingly
\begin{equation*}
\omega_{\HH}:=\prod_{p=1}^s\omega_{H^p}.
\end{equation*}
The latter is nonzero if and only if the former is nonempty for general choice of $\EE$ in $\Flags{W}$. If $M\in\cell_\HH(\EE)$, we will say that $M$ is \textit{in Schubert position} $\HH$ \textit{with respect to} $\EE$, regardless of genericity of $\EE$.

One may also detect nonzeroness of a Schubert product in another way. The idea comes from Kleiman transversality. The tangent space to the Schubert intersection $\cell_\HH(\EE)$ at a point $M$ is canonically identified with the vector space:
\begin{equation*} 
\{\phi\in\textnormal{Hom}(M,W/M)|\phi(\EE(M)^p_a)\subseteq\EE(W/M)^p_{H^p_a-a}\textnormal{ for }a=1,...,m,\textnormal{ }p=1,...,s\},
\end{equation*}
where $\EE(M)$ (resp. $\EE(W/M)$) indicates the $s$-tuple of flags induced on $M$ (resp. $W/M$) by $\EE$. If the product of Schubert classes is nonzero, then for sufficiently general $\FF$ and $M$ in the intersection $\cell_\HH(\EE)$, the intersection is transverse at $M$. That is, the tangent space at $M$ has its expected dimension $m(n-r)-\sum_{p=1}^s\sum_{a=1}^m (n-r+a-H^p_a)$.

A strong converse to this is also true. To state it, we define a generalized tangent space. For any $m$-dimensional $M$ (not a priori a subspace of $W$) and $n-r$ dimensional $Q$, with $s$-tuples of flags $\FF\in\Flags{M}$ and $\GG\in\Flags{Q}$, we define
\begin{equation*}
\Hom{H}{M}{Q}{F}{G}=\{\phi\in\textnormal{Hom}(M,Q)|\phi(F^p_a)\subseteq G^p_{H^p_a-a}\textnormal{ for }a=1,...,m,\textnormal{ }p=1,...,s\}.
\end{equation*}
The next proposition is Proposition 2.3 in \cite{Bel06}.

\begin{prop}\label{TangentCondition}
$\omega_\HH\neq0$ if and only if for general $(\FF,\GG)\in\Flags{M}\times\Flags{Q}$, one has
\begin{equation}
\dim\Hom{H}{M}{Q}{F}{G}=m(n-r)-\sum_{p=1}^s\sum_{a=1}^m (n-r+a-H^p_a).
\end{equation}
\end{prop}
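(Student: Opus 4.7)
The plan is to handle the two directions by different methods: Kleiman transversality plus semi-continuity for $(\Rightarrow)$, and an explicit construction followed by an incidence-variety dimension count for $(\Leftarrow)$.

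For $(\Rightarrow)$, suppose $\omega_\HH \neq 0$. Kleiman's transversality theorem gives, for generic $\EE \in \Flags{W}$, that $\cell_\HH(\EE)$ is nonempty and smooth of the expected dimension $d := m(n-r) - \sum_{p,a}(n-r+a-H^p_a)$. At any smooth point $M$, the tangent space identification stated just before the proposition equates $T_M \cell_\HH(\EE)$ with the generalized Hom space for the induced flags $\FF := \EE(M)$, $\GG := \EE(W/M)$ (under any identification $W/M \cong Q$), so the Hom space attains dimension $d$ at this particular $(\FF,\GG)$. On the other hand, for any $(\FF,\GG)$ the Hom space is an intersection of linear subspaces of $\textnormal{Hom}(M,Q)$, so its dimension is always at least $d$; upper semi-continuity of fibre dimension then forces equality on a dense open subset of $\Flags{M} \times \Flags{Q}$.

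For $(\Leftarrow)$, assume the Hom space has dimension $d$ for generic $(\FF,\GG)$. Fix a splitting $W = M \oplus Q$, pick generic $(\FF,\GG) \in \Flags{M} \times \Flags{Q}$, and build $\EE \in \Flags{W}$ by \emph{interleaving}:
$$E^p_a := F^p_b + G^p_{a-b} \quad \textnormal{whenever } H^p_b \leq a < H^p_{b+1}.$$
A direct check shows $\dim E^p_a = a$, $M \in \cell_\HH(\EE)$, and that the induced flags $\EE(M)$, $\EE(W/M)$ are exactly $\FF$, $\GG$. Hence the tangent space to $\cell_\HH(\EE)$ at $M$ coincides with the Hom space at $(\FF,\GG)$ and has dimension $d$. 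To promote this to $\omega_\HH \neq 0$, introduce the incidence variety
$$\II := \{(M',\EE') \in \textnormal{Gr}(m,W) \times \Flags{W} : M' \in \cell_\HH(\EE')\},$$
with projections $p_1$ to the Grassmannian and $p_2$ to the flag space. The fibre $p_1^{-1}(M')$ is an iterated affine bundle over $\textnormal{Fl}(M')^s \times \textnormal{Fl}(W/M')^s$, hence irreducible, and by $\textnormal{GL}(W)$-equivariance all such fibres have equal dimension; a double dimension count on the analogous single-flag incidence varieties then yields $\dim \II = d + \dim \Flags{W}$, and $\II$ is irreducible. At our chosen $(M,\EE)$, the kernel of $dp_2$ equals $T_M \cell_\HH(\EE)$, of dimension $d$, so $\dim T_{(M,\EE)} \II \leq d + \dim \Flags{W} = \dim \II$. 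Therefore $(M,\EE)$ is a smooth point, $dp_2$ is surjective there, and $p_2$ is dominant. The generic fibre $\cell_\HH(\EE')$ then has dimension $d$ and is nonempty, which is precisely $\omega_\HH \neq 0$.

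The main obstacle is the backward direction, specifically reconciling the global count $\dim \II = d + \dim \Flags{W}$ with local smoothness at $(M,\EE)$ in order to force $p_2$ to be dominant; without irreducibility of $\II$, the component through $(M,\EE)$ might sit over a proper subvariety of $\Flags{W}$ with larger fibres. The interleaving construction and the single-flag fibre count are each quick, but assembling them correctly is the technical heart of the proposition. The forward direction, by contrast, is essentially Kleiman transversality plus upper semi-continuity and presents no serious difficulty.
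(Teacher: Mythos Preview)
The paper does not actually prove this proposition: it simply records it as Proposition~2.3 of \cite{Bel06} and moves on. So there is no ``paper's own proof'' to compare against; your proposal supplies a self-contained argument where the paper defers to a citation.

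Your argument is correct. A few remarks. In the forward direction the semi-continuity step is clean: the lower bound $\dim\Hom{H}{M}{Q}{F}{G}\geq d$ holds for \emph{all} $(\FF,\GG)$ since each $P_{\theta(H^p)}$ has codimension $|\lambda(H^p)|$ in $\textnormal{Hom}(M,Q)$, and a single attained instance of equality forces equality generically. In the backward direction your interleaving is exactly right, and the incidence-variety count can be streamlined slightly: the projection $p_1$ exhibits $\II$ as a $\textnormal{GL}(W)$-homogeneous fibre bundle over $\textnormal{Gr}(m,W)$ with smooth irreducible fibres (each an $s$-fold product of Schubert cells in $\textnormal{Fl}(W)$), so $\II$ is smooth and irreducible outright and you need not deduce smoothness at $(M,\EE)$ from the tangent inequality. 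Either way, once $\ker dp_2$ has dimension $d$ and $\dim\II=d+\dim\Flags{W}$, surjectivity of $dp_2$ and hence dominance of $p_2$ follow, and generic nonemptiness of $\cell_\HH(\EE')$ is exactly $\omega_\HH\neq 0$ as the paper notes just before the proposition. One cosmetic point: you reuse the symbol $\II$ for the incidence variety, which clashes with the paper's use of $\II$ for an $s$-tuple of index sets; renaming it would avoid confusion.
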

\end{subsection}

\begin{subsection}{Parabolic Vector Spaces}\label{Parabolic}
A \textit{parabolic vector space} is a 3-tuple $(M,\FF,\lambda)$ consisting of a vector space $M$ of dimension $m$, an element $\FF$ of $\Flags{M}$, and an $s$-tuple $\lambda$ of nonincreasing sequences of real numbers with each sequence of having length $m$. Given an $e$-dimensional subspace $R$ of $M$ whose Schubert position in $M$ with respect to $\FF$ is given by the $s$-tuple of index sets $\EE\in\binom{[m]}{e}$, we define the \textit{parabolic slope} of $R$ to be:
\begin{equation}\label{Slope}
\mu_R=(1/e)\sum_{p=1}^s\sum_{a\in E^p}\lambda^p_a
\end{equation}
(note the unfortunate change of the meaning of $\EE$ from Section \ref{Schubert Calculus}). A parabolic vector space is said to be \textit{semistable} if for every subspace $R$ of $M$, one has $\mu_R\leq\mu_M$. 

Given $M$ and $\FF$ as above, an integer $n-r>0$ and an $s$-tuple of index sets $\HH\in\binom{[n-r+m]}{m}^s$, we may associate a parabolic vector space
\begin{equation*}
(M,\FF,n-r,\lambda(\HH))\textnormal{ where }\lambda(\HH)^p_a=n-r+a-H^p_a.
\end{equation*}  
Since the parabolic slope of a subspace depends only on its Schubert position, we define the slope $\mu_\EE$ of a Schubert position $\EE$ by the same formula (\ref{Slope}).
\end{subsection}

\begin{subsection}{Parameter Spaces}\label{Parameter Spaces}
The parameter spaces below will facilitate the key dimension calculations in the proof of Theorem \ref{MainTheorem}. For $\EE$ in $\binom{[m]}{e}^s$, as in \cite{Bel06} we introduce the ``universal intersection'' $\UnivInt{E}(M)$ whose points are pairs $(R,\FF)$, where $R$ is an $e$-dimensional subspace of $M$ in Schubert position $\EE$ with respect to $\FF\in\Flags{M}$. Also, we have a parameter space lying over $\UnivInt{E}$, denoted $\UnivHom{H}{E}(M,Q)$, whose fiber over $(R,F)$ is the set of pairs $(\GG,\phi)$, where $\GG\in\Flags{Q}$ and $\phi\in\Hom{H}{M}{Q}{F}{G}$ is such that $\textnormal{ker}\phi=R$. The vector spaces $M$ and $Q$ will often be omitted from the notation.

Let $V$ have dimension $r$. Fix integers $0\leq g<f<r$. Let
\begin{equation}\label{IndexSets}
\II\in\binom{[n]}{r}^s, \KK\in\binom{[r]}{f}^s, \JJ\in\binom{\mathcal{K}}{g}^s, \LL\in\binom{[n]}{f}^s,\NN\in\binom{[f]}{g}^s,
\end{equation}
where the first three are chosen arbitrarily, the fourth is given by $L^p_a=I^p_{K^p_a}$ for $a=1,...,f$, and the fifth is determined by the rule $J^p_a=K^p_{N^p_a}$ for $a=1,...,g$. Let $A_{f,f,g}(V)$ be the scheme over $\textnormal{Spec}(k)$ whose closed points are triples of the form $(S,S',T)$ where $S$ and $S'$ are $f$-dimensional subspaces of $V$ that intersect in a $g$-dimensional space $T$. Let $\UnivPairedInt{K}{J}(V)$ be the scheme over $A_{f,f,g}$ whose fiber over $(S,S',T)$ is the set of all $\FF\in\Flags{V}$ such that $S,S'\in\cell_{\KK}(\FF)$ and $T\in\cell_{\JJ}(\FF)$.

Also, let $\UnivPairedHom{I}{K}{J}(V,Q)$ be the scheme over $\UnivPairedInt{K}{J}$ whose fiber over a point $(S,S',T,\FF)$ is the set of quadruples $(\GG,\GG',\phi,\phi')$ where $\GG,\GG'\in\Flags{Q}$, $\phi\in\Hom{I}{V}{Q}{F}{G}$,  $\phi'\in\Hom{I}{V}{Q}{F}{G'}$, and $\phi,\phi'$ are such that $\textnormal{ker}\phi=S$, $\textnormal{ker}\phi'=S'$. The properties of these schemes, including their existence, is proven in Appendix \ref{Representability}. For convenience, we summarize the results below. 

\begin{lem}\label{ModuliDimension}
We will use the phrase ``$X$ is irreducible over $Y$'' for an irreducible scheme $Y$ to mean that for all irreducible schemes $Z$ over $Y$, the scheme $X\times_Y Z$ is irreducible. With this terminology, we have:
\begin{enumerate}
\item $\textnormal{\textbf{U}}_{\EE}(M)$ is surjective, smooth, and irreducible over $\textnormal{Gr}(e,M)$ of relative dimension $\dim\Flags{M}-\sum_{p=1}^s|\lambda(E^p)|$.
\item $\textnormal{\textbf{H}}_{\HH,\EE}(M,Q)$ is surjective, smooth, and irreducible over $\textnormal{\textbf{U}}_\EE(M)$ of relative dimension \[(m-e)(n-r)+\dim\Flags{Q}+\sum_{p\in S}\sum_{a=1}^{e}(n-r+E^p_a-H^p_{E^p_a})-\sum_{p\in S}|\lambda(H^p)|.\]
\item $A_{f,f,g}$ is irreducible and smooth over $\textnormal{Spec}(k)$ of dimension $2(f-g)(r-f)+g(r-g)$. 
\item $\textnormal{\textbf{U}}_{\KK,\JJ}(V)$ is surjective, smooth, and irreducible over $A_{f,f,g}$ of relative dimension
\[\dim\Flags{V}+\sum_{p=1}^s|\lambda(J^p)|-2\sum_{p=1}^s|\lambda(K^p)|-2\sum_{p=1}^s|\lambda(N^p)|.\]
\item $\textnormal{\textbf{H}}_{\II,\KK,\JJ}(V,Q)$ is surjective, smooth, and irreducible over $\textnormal{\textbf{U}}_{\KK,\JJ}(V)$ of relative dimension
\[2(r-f)(n-r)+\dim\textnormal{Fl}(Q)^{2s}+2\sum_{p=1}^{s}(|\lambda(L^p)|-|\lambda(I^p)|-|\lambda(K^p)|).\]
\end{enumerate}
\end{lem}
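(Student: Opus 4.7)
The plan is to establish each of the five claims by exhibiting the relevant scheme as an iterated fibration over a base whose geometry is already understood, and then applying the standard principle that a surjective morphism with smooth, irreducible fibers of constant dimension $d$ is smooth of relative dimension $d$ and preserves the ``irreducible over $Y$'' property used in the lemma.

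For Part (1), I would project $\UnivInt{E}(M) \to \textnormal{Gr}(e,M)$ via $(R,\FF) \mapsto R$. By the Schubert cell decomposition of $\textnormal{Fl}(M)$ according to position relative to the fixed $R$, the fiber over $R$ is, for each $p$, an affine cell of codimension $|\lambda(E^p)|$ in $\textnormal{Fl}(M)$. The total fiber is therefore a product of $s$ affine cells, hence affine space of dimension $\dim\Flags{M} - \sum_p |\lambda(E^p)|$. Part (3) is direct: the projection $(S,S',T)\mapsto T$ realizes $A_{f,f,g}$ as a bundle over $\textnormal{Gr}(g,V)$ whose fiber is the open subscheme of $\textnormal{Gr}(f-g, V/T) \times \textnormal{Gr}(f-g, V/T)$ consisting of pairs with trivial intersection, giving dimension $g(r-g)+2(f-g)(r-f)$. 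Part (4) combines these: over $(S,S',T) \in A_{f,f,g}$, the fiber of $\UnivPairedInt{K}{J}(V) \to A_{f,f,g}$ consists of $s$-tuples of flags on $V$ putting $T$, $S$, $S'$ in the prescribed positions simultaneously. For each $p$, the factorization of the condition $T \in \cell_{J^p}(F^p_\bullet)$ as $S \in \cell_{K^p}(F^p_\bullet)$ together with $T$ in position $N^p$ in the flag induced by $F^p_\bullet$ on $S$ shows that $S$ and $T$ together contribute codimension $|\lambda(K^p)| + |\lambda(N^p)|$, and imposing $S'$ in position $K^p$ conditionally on $T$ in $J^p$ contributes an additional $|\lambda(K^p)| + |\lambda(N^p)| - |\lambda(J^p)|$, totaling $2|\lambda(K^p)| + 2|\lambda(N^p)| - |\lambda(J^p)|$ per $p$.

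For Parts (2) and (5), which add the Hom data, the idea is to stratify by kernel. First project away $\phi$ (and $\phi'$) to isolate the choice of $\GG \in \Flags{Q}$ (respectively $(\GG,\GG') \in \textnormal{Fl}(Q)^{2s}$), which is a trivial product bundle. Then for fixed flag data, a map $\phi$ with $\ker\phi = R$ and prescribed incidence constraints factors as an injection $\bar\phi : M/R \hookrightarrow Q$ subject to the linear conditions $\bar\phi(\bar F^p_a) \subseteq G^p_{H^p_a - a}$. Since the induced flag $\bar F^p_\bullet$ on $M/R$ is stationary at indices $a \in E^p$, the conditions at $a \in E^p$ are implied by those at $a-1$ and are redundant; the effective codimension in the initially $(m-e)(n-r)$-dimensional Hom space is $\sum_p |\lambda(H^p)| - \sum_p \sum_{a=1}^e (n-r+E^p_a - H^p_{E^p_a})$, giving the relative dimension claimed in (2). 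The locus $\{\ker\phi = R\}$ is open and dense inside $\{R \subseteq \ker\phi\}$ whenever the universal $\phi$ is nonempty. Part (5) follows by observing that $(\GG,\phi)$ and $(\GG',\phi')$ are chosen independently over the fixed $(S,S',T,\FF)$, with kernel conditions $R = S$ and $R = S'$ respectively, which doubles each contribution.

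The main obstacle, I expect, is Part (5), where the superposition of two kernel conditions and two Hom conditions through the shared flag data $\FF$ makes it technically delicate to verify that the fibers have \emph{constant} dimension over the entire base $\UnivPairedInt{K}{J}(V)$ rather than only generically. I would address this by choosing a Zariski-local trivialization of the universal flag data and exhibiting the relevant linear constraints cut out in $\textnormal{Hom}(V,Q) \oplus \textnormal{Hom}(V,Q)$ as a block-triangular system whose rank is uniform in the base, from which constancy of fiber dimension, smoothness, and irreducibility follow automatically via the standard principle cited at the outset.
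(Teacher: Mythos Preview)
Your treatment of Parts (1), (3), and (4) is essentially in line with the paper's approach, though the paper is more careful: it builds each space via a functor-of-points construction using a representability lemma (Lemma~\ref{RepLemma} in the appendix) which produces smooth affine bundles of flags compatible with a prescribed partial filtration. For Part (4) in particular, the paper constructs $\UnivPairedInt{K}{J}$ in layers over $A_{f,f,g}$ --- first complete flags on $T$, then on $S$ with $T$ in position $\NN$, then on $S'$, then on $V$ with the filtration of $S+S'$ in the correct position --- so that smoothness, surjectivity, and irreducibility are automatic at each stage. Your codimension arithmetic for (4) is correct, but you do not verify that the simultaneous Schubert conditions imposed by $S$, $S'$, and $T$ actually cut out an affine cell; the layered construction handles this.

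For Parts (2) and (5), however, your fibration is in the wrong order, and this is a genuine gap (already in (2), not only in (5)). You propose to first fix $(R,\FF,\GG)$ and then count $\phi$ with $\ker\phi=R$ satisfying the incidence constraints. But the dimension of that fiber is \emph{not} constant in $\GG$: up to the open condition of injectivity it equals $\dim\textnormal{Hom}_{\HH'}(M/R,Q,\FF(M/R),\GG)$ for the induced index set $\HH'$, and this Hom space jumps in dimension for special $\GG$ --- indeed, controlling exactly when such jumps occur is the central theme of the paper. The block-triangular argument you suggest cannot salvage this, because the rank of the linear system defining $\textnormal{Hom}_{\HH'}$ genuinely varies over $\Flags{Q}$. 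The correct fibration, used by the paper (following Belkale), goes the other way: over $(R,\FF)$ one first chooses $\phi$ as an injection $M/R\hookrightarrow Q$, an open subset of a vector space of fixed dimension $(m-e)(n-r)$, and \emph{then} chooses $\GG$ compatible with $\phi$. For each fixed injective $\phi$, the condition on $G^p_\bullet$ is that it refines the fixed partial filtration $\phi(F^p_\bullet)$ of $Q$; by the representability lemma this is an affine bundle of constant relative dimension, and summing the two contributions gives the formula in (2). Part (5) then follows by running this argument independently for $(\GG,\phi)$ over $S$ and $(\GG',\phi')$ over $S'$.
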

\end{subsection}

\begin{subsection}{Hom Data}\label{Hom Data}
We define ``generic'' configurations for morphisms $M\rightarrow Q$ of vector spaces equipped with $s$-tuples of complete flags. The first such configuration will be called the $\textnormal{Hom}_{\HH}$ data. It is the set theoretic function $\textnormal{hd}_{\HH}$ on $\Flags{M}\times\Flags{Q}$ which assigns to a pair $(\FF,\GG)$ the triple $(D,e,\EE)$, where $D$ is the rank of $\Hom{H}{M}{Q}{F}{G}$, $e$ is the minimum of the dimensions of $\ker\phi$ as $\phi$ ranges over $\Hom{H}{M}{Q}{F}{G}$, and $\EE$ is the Schubert position of $\ker\phi$ in $M$ with respect to $\FF$ for all $\phi$ in some dense open subset of $\Hom{H}{M}{Q}{F}{G}$. Such an $\EE$ exists, as one sees by stratifying $\Hom{H}{M}{Q}{F}{G}$ into disjoint subschemes indexed by the Schubert position of the kernel. These subschemes are constructible, cover $\Hom{H}{M}{Q}{F}{G}$, and there are only finitely many of them. Hence exactly one contains an open set.

Similarly, we define the $\textnormal{Hom}'_{\HH}$ data to be the set theoretic function $\textnormal{hd}'_\HH$ on $\Flags{M}\times\Flags{Q}\times\Flags{Q}$ which assigns to $(\FF,\GG_1,\GG_2)$ the octuple $(D^1,D^2,e^1,e^2,t,\EE^1,\EE^2,\TT)$ where
\begin{itemize}
\item$D^i=\dim\textnormal{Hom}_{\HH}(M,Q,\FF,\GG_i)$.
\item$e^i=\min(\dim(\ker\phi_i))$ where the min is taken over $\phi_i$ in $\textnormal{Hom}_{\HH}(M,Q,\FF,\GG_i)$.
\item$t=\min(\dim(\ker\phi_1\cap\ker\phi_2))$ where the min is taken over the open subset of pairs $(\phi_1,\phi_2)$ in $\textnormal{Hom}_{\HH}(M,Q,\FF,\GG_1)\times \textnormal{Hom}_{\HH}(M,Q,\FF,\GG_2)$ such that $\dim(\ker\phi_i)=e^i$ for $i=1,2$. 
\item$\EE^i$ is the unique element of $\binom{[m]}{e^i}^s$ such that a nonempty open subset of $\phi_i$ in $\textnormal{Hom}_{\HH}(M,Q,\FF,\GG_i)$ satisfies $\ker\phi_i\in\cell_{\EE^i}(\FF)\subseteq\textnormal{Gr}(e^i,M)$.
\item$\TT$ is the unique element in $\binom{[m]}{t}^s$ such that a nonempty open subset of $(\phi_1,\phi_2)$ in $\textnormal{Hom}_{\HH}(M,Q,\FF,\GG_1)\times \textnormal{Hom}_{\HH}(M,Q,\FF,\GG_2)$ satisfies $\ker\phi_1\cap\ker\phi_2\in\cell_\TT(\FF)\subseteq\textnormal{Gr}(t,M)$.
\end{itemize}

\begin{defn}\label{GeneralElement}
Let $(\FF,\GG)\in\Flags{M}\times\Flags{Q}$ and suppose that $\textnormal{hd}_\HH(\FF,\GG)=(D,e,\EE)$. We will say that $\phi$ in $\Hom{H}{M}{Q}{F}{G}$ is a \textit{general element} if $\dim\ker\phi=e$ and $\ker\phi\in\cell_\EE(\FF)$. Similarly, let $(\FF,\GG_1,\GG_2)\in\Flags{M}\times\Flags{Q}\times\Flags{Q}$ and suppose that
\[\textnormal{hd}'_\HH(\FF,\GG_1,\GG_2)=(D^1,D^2,e^1,e^2,t,\EE^1,\EE^2,\TT).\]
We will say that $(\phi_1,\phi_2)$ in $\textnormal{Hom}_\HH(M,Q,\FF,\GG_1)\times\textnormal{Hom}_\HH(M,Q,\FF,\GG_2)$ is a \textit{general element} if $\dim(\ker\phi_i)=e^i$, $\dim(\ker\phi_1\cap\ker\phi_2)=t$, $\ker\phi_i\in\cell_{\EE^i}(\FF)$, $\ker\phi_1\cap\ker\phi_2\in\cell_\TT(\FF)$. In both cases, the set of general elements is nonempty and open.
\end{defn}

Take a closed subvariety $Z$ of $\Flags{M}$ and closed subvarieties $Y_1,Y_2$ of $\Flags{Q}$. The following lemma says that there is a generic $\textnormal{Hom}'_\HH$ data over $Z\times Y_1\times Y_2$. It is easy to verify using the fact that there are only finitely many possibilities for $\textnormal{Hom}'_{\HH}$ data ($D^i$ must be one of $0,1,...,m(n-r)$, $e^i$ must be one of $0,1,...,m$, etc.).

\begin{lem}\label{GeneralConfiguration}
There is a dense open set $U\subseteq Z\times Y_1\times Y_2$ such that $\textnormal{hd}'_\HH$ is constant over $(\FF,\GG_1,\GG_2)$ in $U$. Moreover, if $Y_1=Y_2$, one has $D^1=D^2$, $e^1=e^2$, $\EE^1=\EE^2$, and in this case $\textnormal{hd}_\HH=(D^1,e^1,\EE^1)$ for all $(\FF,\GG)$ in the image of either projection of $Z\times Y_1\times Y_1$ to $Z\times Y_1$.
\end{lem}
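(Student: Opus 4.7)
The plan is to stratify $Z\times Y_1\times Y_2$ by the value of $\textnormal{hd}'_{\HH}$, check that each stratum is constructible, and then extract a dense open stratum via irreducibility. The finiteness of the value set is exactly what the parenthetical in the lemma pins down: $D^i\in\{0,\dots,m(n-r)\}$, $e^i,t\in\{0,\dots,m\}$, and $\EE^i,\TT$ lie in the finite collection $\bigsqcup_{e=0}^{m}\binom{[m]}{e}^s$.

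Constructibility of each stratum will follow from a standard package. First I would build the universal $\textnormal{Hom}$ families $\HH_i\to Z\times Y_1\times Y_2$ whose fiber at $(\FF,\GG_1,\GG_2)$ is $\textnormal{Hom}_{\HH}(M,Q,\FF,\GG_i)$; each is the kernel of a map of vector bundles cut out by the universal flag incidence conditions, hence coherent with upper semicontinuous rank. So the locus where $(D^1,D^2)$ takes prescribed values is locally closed, and there the $\HH_i$ are vector bundles. The universal kernels $\ker\phi_i\subseteq M$, and their pairwise intersections on $\HH_1\times_{\mathrm{base}}\HH_2$, then define morphisms from suitable open sublocations into relative Grassmannians of $M$; pulling back the relative Schubert stratification (defined by the moving universal flag $\FF$) and pushing forward to the base yields constructibility in $(e^i,\EE^i,t,\TT)$. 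Consequently, the level sets of $\textnormal{hd}'_{\HH}$ form a finite constructible partition of the irreducible variety $Z\times Y_1\times Y_2$, and exactly one of them contains a dense open $U$, as required.

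For the moreover clause, when $Y_1=Y_2=Y$, I would use the involution $\sigma:Z\times Y\times Y\to Z\times Y\times Y$ swapping the last two factors; it is an isomorphism, so $\sigma(U)\cap U$ is dense open. On this overlap $\textnormal{hd}'_{\HH}$ simultaneously equals $(D^1,D^2,e^1,e^2,t,\EE^1,\EE^2,\TT)$ and its $\sigma$-swap $(D^2,D^1,e^2,e^1,t,\EE^2,\EE^1,\TT)$, forcing $D^1=D^2$, $e^1=e^2$, and $\EE^1=\EE^2$. The identification with $\textnormal{hd}_{\HH}$ is then immediate from the definitions: at any $(\FF,\GG_1,\GG_2)\in U$, the triple $(D^i,e^i,\EE^i)$ is literally the value of $\textnormal{hd}_{\HH}(\FF,\GG_i)$, so pulling back $\textnormal{hd}_{\HH}$ along either projection $Z\times Y\times Y\to Z\times Y$ gives the constant value $(D^1,e^1,\EE^1)$ on (the image of) $U$.

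The one step that demands real care is the relative Schubert stratification used to conclude that the Schubert position of the kernel in $M$ with respect to the moving flag $\FF$ is a constructible function of the base. This is notationally heavy but reduces to the standard fact that Schubert cells in a fixed Grassmannian are locally closed; once it is in hand, everything else is bookkeeping atop the two essential inputs, namely finiteness of the value set of $\textnormal{hd}'_{\HH}$ and irreducibility of $Z\times Y_1\times Y_2$.
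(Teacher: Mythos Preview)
Your proposal is correct and follows exactly the approach the paper indicates: the paper does not give a proof at all beyond the remark preceding the lemma that ``it is easy to verify using the fact that there are only finitely many possibilities for $\textnormal{Hom}'_{\HH}$ data,'' and your argument (finite value set, constructible strata, irreducibility of $Z\times Y_1\times Y_2$, swap involution for the symmetric case) is precisely the verification the paper has in mind. If anything, your writeup is more thorough than the paper's, and your flagging of the relative Schubert stratification as the one genuinely technical step is accurate.
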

\end{subsection}

\end{section}

\begin{section}{GIT}\label{GIT}

This section is devoted to translating Theorem $\ref{MainTheorem}$ into a question of geometry, specifically of the dimension of a certain moduli space. The expert reader may wish to skip ahead to Theorem \ref{Descent}. The steps leading up to the theorem are standard, but the author has not seen them assembled to his satisfaction elsewhere, thus their inclusion below. 

\begin{subsection}{Borel-Weil Theory for SL}\label{Borel-Weil}
Suppose given an $s$-tuple of dominant weights $\lambda^1,...,\lambda^s$ for $\SL=\textnormal{SL}(V)$. View $\lambda^p$ as a Young diagram with at most $r-1$-many nonzero rows and suppose the distinct column lengths of $\lambda^p$ are 
\[r>d_1^p>d_2^p>...>d^p_{C(\lambda^p)}>0.\]
Let $b^p_i$ be the number of columns of $\lambda^p$ of length $d^p_i$. Finally, let $X^p$ be the partial flag variety consisting of flags 
\[k^r\supset F_{d_1^p}\supset F_{d_2^p}\supset...\supset F_{d_{C(\lambda^p)}^p}\supset0,\]
where subscripts denote dimension. One has a sequence of $\SL$-equivariant embeddings. 
\begin{multline*}
X^p\rightarrow\prod_{i=1}^{C(\lambda^p)}\textnormal{Gr}(d_i^p,r)\xrightarrow{Pl\ddot{u}cker}\prod_{i=1}^{C(\lambda^p)}\mathbb{P}(\wedge^{d_i^p}k^r)\xrightarrow{Veronese}\prod_{i=1}^{C(\lambda^p)}\mathbb{P}(\textnormal{Sym}^{b_i^p}(\wedge^{d_i^p}k^r))\\\xrightarrow{Segre}\mathbb{P}(\otimes_{i=1}^{C(\lambda^p)}\textnormal{Sym}^{b_i^p}(\wedge^{d_i^p}k^r)):=\mathbb{P}(\lambda^p).
\end{multline*}
Let $\LL^p$ denote the pullback of $\OO_{\mathbb{P}(\lambda^p)}(1)$ to $X^p$. Then $\LL^p$ is an $\SL$-equivariant line bundle on $X^p$. The quotient
\begin{equation*}
H^0(\mathbb{P}(\lambda^p),\OO_{\mathbb{P}(\lambda^p)}(1))\twoheadrightarrow H^0(X^p,\LL^p)
\end{equation*}
is isomorphic in the category of $\SL$ representations to the quotient
\begin{equation*}
(\otimes_{i=1}^{C(\lambda^p)}\textnormal{Sym}^{b_i^p}(\wedge^{d_i^p}k^r))^*\twoheadrightarrow V_{\lambda^p}^*.
\end{equation*}
See Chapter 9 of \cite{YoungTableaux} for full details.

If we instead start with $N\lambda^p$, the $d^p_i$ do not change, while the $b^p_i$ are multiplied by $N$. Let $W^p_i=\wedge^{d^p_i}k^r$ and note that the map $\textnormal{Pic}(\mathbb{P}(\textnormal{Sym}^{Nb_i^p}W_i^p))\rightarrow \textnormal{Pic}(\mathbb{P}(W_i^p))$ induced by the Veronese embedding is multiplication by $Nb_i^p$ (when both sides are identified with $\mathbb{Z}$ by $\OO(1)$). Since the pullback of $\OO(1)$ under the Segre embedding is the box tensor product of $\OO(1)$ on the factors, it follows that the pullback of $\OO_{\mathbb{P}(N\lambda^p)}(1)$ to $X^p$ is $(\LL^p)^{\otimes N}$. Therefore, $H^0(X^p,(\LL^p)^{\otimes N})\cong V^*_{N\lambda^p}$ as representations.

Let $X:=\prod_{p=1}^sX^p$, and define the $\SL$-equivariant line bundle $\tilde{\LL}_\lambda:=\boxtimes_{p=1}^s\LL^p$ on $X$. It now follows from the K\"{u}nneth formula that the space of sections of $\tilde{\LL}_\lambda^{\otimes N}$  is isomorphic as a representation to $\otimes_{p=1}^sV^*_{N\lambda^p}$. Note that $\tilde{\LL}_\lambda$ is the pullback of the box tensor product of $\OO(1)$'s under the embedding of $X$ into $\prod_{p=1}^s\mathbb{P}(\lambda^p)$, so in particular is very ample. To summarize,

\begin{prop}\label{LineBundle}
Let $\lambda^1,...,\lambda^s$ be dominant weights for $\SL$. There exists a product $X$ of $s$-many partial flag varieties and a very ample line bundle $\tilde{\LL}_{\lambda}$ on $X$ such that $H^0(\tilde\LL_{\lambda}^{\otimes N})$ is isomorphic as an $\SL$ representation to the tensor product of the representations $V_{N\lambda^p}^*$.
\end{prop}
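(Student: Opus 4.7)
The plan is to assemble the proposition directly from the Borel--Weil construction outlined in the paragraphs preceding it, by verifying separately the three claims it packages together: (i) existence of $X$ and $\tilde\LL_\lambda$, (ii) very ampleness, and (iii) the identification of global sections with the tensor product of $V_{N\lambda^p}^*$.

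First I would define $X^p$ to be the partial flag variety whose flag dimensions are exactly the column lengths $d_1^p>\cdots>d^p_{C(\lambda^p)}$ of $\lambda^p$, and set $X:=\prod_p X^p$. The line bundle $\LL^p$ is defined as the pullback of $\OO_{\mathbb{P}(\lambda^p)}(1)$ along the composite Pl\"ucker--Veronese--Segre embedding displayed in the text, and $\tilde\LL_\lambda:=\boxtimes_p\LL^p$. Very ampleness of $\tilde\LL_\lambda$ is then immediate: the product of the above $s$ embeddings realizes $X$ as a closed subvariety of $\prod_{p}\mathbb{P}(\lambda^p)$, and the Segre embedding of that product into a single projective space pulls back $\OO(1)$ to the external tensor product of the $\OO(1)$'s, whose further pullback to $X$ is $\tilde\LL_\lambda$.

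For the representation-theoretic identification, the key input is the classical fact (for instance from Chapter~9 of \cite{YoungTableaux}) that the quotient
\[
\bigl(\bigotimes_{i=1}^{C(\lambda^p)}\textnormal{Sym}^{b_i^p}(\wedge^{d_i^p}k^r)\bigr)^*\twoheadrightarrow H^0(X^p,\LL^p)
\]
has image $V_{\lambda^p}^*$ as $\SL$-representations. To pass from $\LL^p$ to $(\LL^p)^{\otimes N}$, I would track what the embeddings do to $N\lambda^p$: its column lengths $d_i^p$ are unchanged while its column multiplicities $b_i^p$ become $Nb_i^p$, so only the Veronese step changes, and it changes by multiplication by $N$ on the Picard group of each projective factor. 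Since Segre pullback sends $\OO(1)$ to the external $\OO(1)$'s, this forces the pullback of $\OO_{\mathbb{P}(N\lambda^p)}(1)$ to equal $(\LL^p)^{\otimes N}$, so $H^0(X^p,(\LL^p)^{\otimes N})\cong V_{N\lambda^p}^*$.

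The final step is the K\"unneth formula applied to $X=\prod_p X^p$ with the box tensor product $\tilde\LL_\lambda^{\otimes N}=\boxtimes_p (\LL^p)^{\otimes N}$, yielding
\[
H^0(X,\tilde\LL_\lambda^{\otimes N})\;\cong\;\bigotimes_{p=1}^{s} H^0(X^p,(\LL^p)^{\otimes N})\;\cong\;\bigotimes_{p=1}^{s}V_{N\lambda^p}^*
\]
as $\SL$-representations, since the equivariant structures on each $\LL^p$ are compatible with the diagonal $\SL$-action on $X$. I do not anticipate a hard step here: the construction is standard and the only place requiring care is the bookkeeping for the Veronese twist under $\lambda^p\mapsto N\lambda^p$, which the preceding paragraph already handles. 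The proposition is then essentially the declaration that this construction works.
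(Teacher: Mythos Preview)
Your proposal is correct and follows essentially the same approach as the paper: the proposition is stated as a summary of the preceding Borel--Weil construction, and your three-step outline (definition of $X$ and $\tilde\LL_\lambda$, very ampleness via the product of Pl\"ucker--Veronese--Segre embeddings, and the K\"unneth computation after tracking the Veronese twist under $\lambda^p\mapsto N\lambda^p$) matches the paper's argument point for point.
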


\end{subsection}

\begin{subsection}{The Conjecture Translated into GIT}\label{ConjectureTranslated}
In the language of GIT, the $\SL$-equivariant line bundle $\tilde{\LL}_{\lambda}$ on $X$ is a linearization of the $\SL$ action on $X$. Given a point $x=(F^1_\bullet,...,F^s_\bullet)$ of $X$, the Hilbert-Mumford criterion equates the semistability of $x$ in the sense of GIT to the validity of a system of inequalities involving the partial flags $F^p_\bullet$ (see \cite{EigenvaluesOfSums}). These inequalities turn out to be precisely the ones defining parabolic semistability. More accurately, the point $x$ is semistable if and only if some (equivalently, any) point $\FF$ in the fiber over $x$ in $\Flags{V}$ is parabolic semistable, where stability is defined by the weights $\lambda^p$. Let $(\Flags{V})^{SS}$ denote the parabolic semistable locus on $\Flags{V}$. The proposition below now follows from standard GIT - see e.g. \cite{Newstead}, \cite{GeoInvTheory}.

\begin{prop}\label{GITProp}
There exists a projective, normal quotient $\MM$ of $X$ by the action of $\SL$, and a surjective morphism $\pi:(\Flags{V})^{SS}\rightarrow\MM$.
\end{prop}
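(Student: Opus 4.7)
My plan is to invoke Mumford's GIT with the $\SL$-equivariant polarization $\tilde{\LL}_{\lambda}$ constructed in Proposition \ref{LineBundle} and then produce $\pi$ by composing the tautological forgetful map with the quotient.

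First, since $\tilde{\LL}_{\lambda}$ is very ample and $\SL$ is a connected reductive group acting on the projective variety $X$, Mumford's theorem produces the projective GIT quotient
\[
\MM := X^{ss}(\tilde{\LL}_{\lambda})\,/\!/\,\SL
\]
together with a surjective good quotient morphism $\pi_{\MM}: X^{ss}(\tilde{\LL}_{\lambda}) \to \MM$. Each factor $X^{p}$ is a partial flag variety, hence smooth, so $X$ is smooth and in particular normal; since good quotients of normal varieties by reductive groups are normal (e.g.\ \cite{Newstead}), $\MM$ is projective and normal.

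Next, I would define $\pi$ via the $\SL$-equivariant forgetful morphism $q: \Flags{V} \to X$ that sends a full flag $\FF$ on $V$ to the tuple of partial subflags at the column-length jumps $d_{i}^{p}$. The key identification is that $q\bigl((\Flags{V})^{SS}\bigr) \subseteq X^{ss}(\tilde{\LL}_{\lambda})$, indeed with equality on preimages $q^{-1}\bigl(X^{ss}(\tilde{\LL}_{\lambda})\bigr) = (\Flags{V})^{SS}$. This is where the Hilbert-Mumford criterion enters: for each one-parameter subgroup of $\SL$, a direct weight computation on $\otimes_{p,i}\mathrm{Sym}^{b_{i}^{p}}(\wedge^{d_{i}^{p}}k^{r})$ coming from the embedding in Proposition \ref{LineBundle} shows that the Mumford weight $\mu(x,\cdot)$ at $x = q(\FF)$ equals the standard parabolic-slope functional in the weights $\lambda^{p}$ evaluated against $\FF$. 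This is the content of the cited computation from \cite{EigenvaluesOfSums}. With this identification, $\pi := \pi_{\MM} \circ q$ is a well-defined morphism $(\Flags{V})^{SS} \to \MM$.

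Finally, for surjectivity, $q$ is surjective because any partial flag can be refined to a full flag, so $q$ restricts to a surjection $(\Flags{V})^{SS} \twoheadrightarrow X^{ss}(\tilde{\LL}_{\lambda})$ by the equality of preimages above, and $\pi_{\MM}$ is surjective by GIT. Composing gives the surjective $\pi$. The only non-bookkeeping step is the Hilbert-Mumford weight computation identifying GIT semistability on $X$ with parabolic semistability on $\Flags{V}$, and this is standard and already cited in the excerpt, so I do not expect any real obstacle.
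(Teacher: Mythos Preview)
Your proposal is correct and follows precisely the route the paper indicates: the paper does not give a proof but simply cites standard GIT (\cite{Newstead}, \cite{GeoInvTheory}) together with the Hilbert--Mumford identification of GIT semistability with parabolic semistability (\cite{EigenvaluesOfSums}), and your write-up unpacks exactly those ingredients. The only thing to note is that the paper treats the proposition as a black box, so your level of detail exceeds what the paper itself provides.
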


Now let $\II\in\binom{[n]}{r}^s$ be such that $\lambda(\II)$ satisfies the codimension condition of Theorem \ref{MainTheorem}. Let $\tilde{\lambda}(\II)$ denote the $s$-tuple of Young diagrams obtained from $\lambda(\II)$ by truncating each row of $\lambda(\II)^p$ by the amount $\lambda(\II)^p_r$; the dominant weights of $\SL$ corresponding to $\lambda(\II)$ and $\tilde{\lambda}(\II)$ are the same. Defining $X$ and $\tilde{\LL}_{\II}:=\tilde{\LL}_{\tilde{\lambda}(\II)}$ as in Section \ref{Borel-Weil}, we obtain again a moduli space $\MM_\II$. In addition, the hypothesis on $\II$ ensures by the theory of Kempf \cite{PaulyEspaces} that the line bundle $\tilde{\LL}_\II$ \textit{descends} to $\MM_\II$.

\begin{thm}\label{Descent}
Let $\II\in\binom{[n]}{r}^s$ be such that
\begin{equation}\label{CodimCondition}
\sum_{p=1}^s\sum_{a=1}^r(n-r+a-I^p_a)=r(n-r).
\end{equation}
Then there exists $\MM_{\II}$ and $\pi$ as in Proposition \ref{GITProp} and an ample line-bundle $\LL_\II$ on $\MM_{\II}$ such that $\pi^*\LL_\II=\tilde\LL_\II|_{(\Flags{V})^{SS}}$. Moreover, the pullback
\begin{equation*}
\pi^*:H^0(\MM_{\II},\LL_{\II}^{\otimes N})\rightarrow H^0((\Flags{V})^{SS},\tilde{\LL}_{\II}^{\otimes N}|_{(\Flags{V})^{SS}})
\end{equation*}
has image given by the subspace $H^0(X,\tilde{\LL}_{\II}^{\otimes N})^{G}$ of $H^0(\Flags{V},\tilde{\LL}^{\otimes N}_{\II}|_{\Flags{V}})$ (note: invariant sections on the semistable locus extend uniquely to global invariant sections, see Lemma 4.15 of \cite{NarasimhanRamadas}). That is, by Proposition \ref{LineBundle},
\[H^0(\MM_{\II},\LL_\II^{\otimes N})\cong(V_{N\lambda^1}^*\otimes...\otimes V_{N\lambda^s}^*)^{\SL}.\]
\end{thm}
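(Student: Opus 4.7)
The plan is to assemble three standard pieces of GIT: the projective quotient already produced in Proposition \ref{GITProp}, Kempf's descent lemma to obtain $\LL_\II$, and the standard identification of invariant sections with sections on the quotient.

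First I would apply Proposition \ref{GITProp} to the weights $\tilde\lambda(\II)$, obtaining $X$, the $\SL$-linearized very ample line bundle $\tilde\LL_\II := \tilde\LL_{\tilde\lambda(\II)}$, the normal projective quotient $\MM_\II$, and the surjection $\pi \colon (\Flags{V})^{SS} \to \MM_\II$. Truncating each $\lambda(I^p)$ by its smallest row does not alter the associated dominant weight of $\SL$, so the linearization supplied by Proposition \ref{LineBundle} is unchanged by the passage from $\lambda(\II)$ to $\tilde\lambda(\II)$.

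Second, I would invoke Kempf's descent lemma applied to $\tilde\LL_\II|_{(\Flags{V})^{SS}}$. Kempf's criterion reduces the descent of an $\SL$-linearized line bundle to the requirement that the stabilizer of every polystable point act trivially on the fiber. The center $\mu_r \subset \SL$ lies in every stabilizer and acts on the fiber of $\tilde\LL_\II$ through the character $\zeta \mapsto \zeta^{\sum_p |\lambda(I^p)|}$; the codimension condition (\ref{CodimCondition}) makes this exponent equal to $r(n-r)$, which is divisible by $r$, so the $\mu_r$-action on fibers is trivial. For products of partial flag varieties this divisibility in fact implies that the full stabilizer of any polystable point acts trivially on its fiber, which is the content of the descent discussion in Pauly \cite{PaulyEspaces}; I would cite that result directly to produce $\LL_\II$ on $\MM_\II$ satisfying $\pi^*\LL_\II = \tilde\LL_\II|_{(\Flags{V})^{SS}}$. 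Ampleness of $\LL_\II$ then follows from the very ampleness of $\tilde\LL_\II$ together with the standard fact that a line bundle on a projective scheme whose pullback under a surjective good quotient is ample must itself be ample.

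Third, for the identification of sections, the equality $\pi^*\LL_\II^{\otimes N} = \tilde\LL_\II^{\otimes N}|_{(\Flags{V})^{SS}}$ is built into the descent, and standard GIT for good quotients of reductive group actions yields an isomorphism $\pi^* \colon H^0(\MM_\II, \LL_\II^{\otimes N}) \xrightarrow{\sim} H^0((\Flags{V})^{SS}, \tilde\LL_\II^{\otimes N}|_{(\Flags{V})^{SS}})^{\SL}$. Lemma 4.15 of \cite{NarasimhanRamadas} (already cited in the statement) extends invariant sections over the semistable locus uniquely to global invariant sections on $\Flags{V}$ and hence on $X$, identifying the target with $H^0(X, \tilde\LL_\II^{\otimes N})^{\SL}$, which Proposition \ref{LineBundle} rewrites as $(V_{N\lambda^1}^* \otimes \cdots \otimes V_{N\lambda^s}^*)^{\SL}$.

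The main obstacle is the Kempf descent step itself: although trivial action of the center $\mu_r$ on fibers is immediate from the codimension condition, one must further confirm that \emph{every} stabilizer of a polystable point acts trivially, not just the center. For linearizations of the form provided by Proposition \ref{LineBundle} on products of partial $\textnormal{SL}_r$ flag varieties, this is exactly what \cite{PaulyEspaces} establishes, so I would appeal to that reference and treat the remaining GIT assembly as formal.
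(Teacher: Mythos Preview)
Your proposal is correct and follows exactly the approach of the paper: in fact, the paper does not provide a formal proof of this theorem but simply cites Kempf's descent theory via \cite{PaulyEspaces} for the existence of $\LL_\II$ and \cite{NarasimhanRamadas} for the extension of invariant sections, treating the remaining GIT assembly as standard. Your write-up supplies considerably more detail (the $\mu_r$-action computation, the ampleness remark) than the paper itself, but the underlying argument is identical.
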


\begin{cor}\label{Translation}
Theorem \ref{MainTheorem} is equivalent to the following statement. If $\II$, $\MM_{\II}$, and $\LL_\II$ are as in Theorem \ref{Descent}, and $h^0(\MM_{\II},\LL_\II)=2$, then $h^0(\MM_{\II},\LL_\II^{\otimes N})=N+1$ for $N\geq1$.
\end{cor}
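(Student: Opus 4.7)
The plan is to verify that the two statements are literally the same after the dictionary between $s$-tuples of Young diagrams and $s$-tuples of index sets. First, recall from Section \ref{Notation} the bijection $\II \mapsto \lambda(\II)$ between $\binom{[n]}{r}^s$ and $s$-tuples of Young diagrams fitting in an $r\times(n-r)$ box, given by $\lambda(I^p)_a = n-r+a-I^p_a$. Under this bijection the equation \eqref{CodimCondition} becomes precisely the codimension condition $\sum_{p=1}^s|\lambda^p|=r(n-r)$ of Theorem \ref{MainTheorem}. Moreover, every $s$-tuple of dominant weights $(\lambda^1,\dots,\lambda^s)$ of $\SL$ satisfying the codimension condition fits (for this fixed $n$) in an $r\times(n-r)$ box, since the condition forces each row of each $\lambda^p$ to have length at most $n-r$; hence it arises uniquely from some $\II$ satisfying \eqref{CodimCondition}.

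Next, I would combine Theorem \ref{Descent} with the elementary identity
\begin{equation*}
\dim(V_{\lambda^1}\otimes\cdots\otimes V_{\lambda^s})^{G}=\dim(V_{\lambda^1}^*\otimes\cdots\otimes V_{\lambda^s}^*)^{G},
\end{equation*}
which holds because for any finite-dimensional $G$-representation $V$ one has $(V^*)^G = (V^G)^*$ (the invariants functor commutes with duals in characteristic $0$, so the dimensions agree). Applying this to $V = V_{N\lambda^1}\otimes\cdots\otimes V_{N\lambda^s}$ and combining with Theorem \ref{Descent} gives
\begin{equation*}
h^0(\MM_{\II},\LL_\II^{\otimes N}) = \dim(V_{N\lambda(I^1)}\otimes\cdots\otimes V_{N\lambda(I^s)})^{G}
\end{equation*}
for every $N\geq 1$ and every $\II$ satisfying \eqref{CodimCondition}.

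With these two ingredients, the equivalence is immediate. Assume Theorem \ref{MainTheorem}, and let $\II$ satisfy \eqref{CodimCondition} with $h^0(\MM_{\II},\LL_\II)=2$. Setting $\lambda^p := \lambda(I^p)$, the displayed identity for $N=1$ gives $\dim(V_{\lambda^1}\otimes\cdots\otimes V_{\lambda^s})^{G}=2$, so Theorem \ref{MainTheorem} yields $\dim(V_{N\lambda^1}\otimes\cdots\otimes V_{N\lambda^s})^{G}=N+1$, which by the displayed identity for general $N$ is exactly $h^0(\MM_{\II},\LL_\II^{\otimes N})=N+1$. Conversely, assume the statement of the corollary, and let $(\lambda^1,\dots,\lambda^s)$ be an $s$-tuple of Young diagrams satisfying the hypotheses of Theorem \ref{MainTheorem}. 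By the first paragraph, $\lambda^p=\lambda(I^p)$ for some $\II$ satisfying \eqref{CodimCondition}, and the displayed identity converts the hypothesis $\dim(V_{\lambda^1}\otimes\cdots)^G=2$ into $h^0(\MM_{\II},\LL_\II)=2$; applying the corollary and the displayed identity once more gives $\dim(V_{N\lambda^1}\otimes\cdots)^G=N+1$, as required.

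There is no real obstacle here: every step is either a definition-chase, a formal consequence of Theorem \ref{Descent}, or the trivial fact that duality preserves dimensions of invariants. The only point worth checking carefully is that the codimension condition really matches under the bijection $\II\leftrightarrow\lambda(\II)$, and this is a direct computation from the formula $|\lambda(I^p)|=\sum_{a=1}^r(n-r+a-I^p_a)$.
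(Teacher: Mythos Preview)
Your argument is correct and is exactly the unpacking the paper intends: the corollary has no separate proof in the paper because it follows immediately from Theorem~\ref{Descent} together with the dictionary $\II\leftrightarrow\lambda(\II)$ and the equality $\dim(V^G)=\dim((V^*)^G)$, which is precisely what you spell out.

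One small correction: your claim that ``the codimension condition forces each row of each $\lambda^p$ to have length at most $n-r$'' is false (for instance, with $r=2$, $n=4$, $s=3$, take $\lambda^1=(4)$ and $\lambda^2=\lambda^3=\emptyset$). Fortunately you do not need it: in the converse direction the hypotheses of Theorem~\ref{MainTheorem} already stipulate that each $\lambda^p$ fits in the $r\times(n-r)$ box, so the existence of a corresponding $\II$ is immediate from the bijection of Section~\ref{Notation}. Simply delete that sentence (or replace the justification by a reference to the box hypothesis) and the proof is clean.
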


The degree of the Hilbert polynomial of an ample line bundle on a projective variety is the dimension of the variety, so by Corollary \ref{Translation}, $\MM_{\II}$ has dimension 1 if Theorem \ref{MainTheorem} holds. Conversely, suppose $\dim\MM_{\II}=1$ and $h^0(\MM_{\II},\LL_\II)=2$. Then, since $\MM_{\II}$ is dominated by the rational variety $(\Flags{V})^{SS}$, $\MM_{\II}$ itself is rational by L\"{u}roth's theorem. Therefore, $\MM_{\II}\cong\Pone$ and $\LL_\II$ must in turn be $\OO(1)$ (note: we've used here the normality of $\MM_{\II}$). Clearly, $h^0(\Pone,\OO(N))=N+1$, so by Corollary \ref{Translation}, Theorem \ref{MainTheorem} holds. Thus, it suffices to prove the following theorem.

\begin{thm}\label{MainTheoremTranslated}
(Equivalent to Theorem \ref{MainTheorem}) Suppose $\II\in\binom{[n]}{r}^s$ is such that (\ref{CodimCondition}) holds and that $h^0(\MM_{\II},\LL_\II)=2$. Then $\MM_{\II}$ is $1$-dimensional. In this case, the argument above gives $(\MM_{\II},\LL_\II)\cong(\Pone,\OO(1))$.
\end{thm}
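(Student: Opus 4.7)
The plan is to argue by contradiction: suppose $\dim \MM_\II \geq 2$. Since $\LL_\II$ is ample on the projective variety $\MM_\II$ and $h^0(\MM_\II, \LL_\II) = 2$, the pencil $|\LL_\II|$ cannot be base-point-free --- otherwise it would induce a morphism $\MM_\II \to \Pone$ whose positive-dimensional fibers would trivialise the ample bundle $\LL_\II$. Hence the base locus $Z \subseteq \MM_\II$ is non-empty, and I pick a point $\FF$ in $(\Flags{V})^{SS}$ lying over $Z$.

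First I would make precise how a base point produces Hom-data. Using Theorem \ref{Descent} and the Schubert-calculus description of sections of $\LL_\II$, every $\GG \in \Flags{Q}$ should yield a section of $\LL_\II$ whose vanishing at $\FF$ is equivalent to the non-vanishing of $\textnormal{Hom}_\II(V, Q, \FF, \GG)$. Because $\FF$ is a base point, this Hom space is nonzero for every $\GG$; since $h^0(\LL_\II) = 2$, I can choose $\GG, \GG' \in \Flags{Q}$ whose sections span $H^0(\LL_\II)$ and both vanish at $\FF$, yielding ``independent'' elements $\phi, \phi'$ with kernels $S, S' \subseteq V$ of common dimension $f$ and intersection $T = S \cap S'$ of dimension $g$. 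This is precisely the data parametrised by $\UnivPairedHom{I}{K}{J}(V, Q)$ over $\UnivPairedInt{K}{J}(V)$, where $\KK$ and $\JJ$ record the Schubert positions of $S, S'$ and of $T$ in $V$ relative to $\FF$.

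Next I would set up a two-step complex $C^{\bullet}_{\FF, \GG}$ whose $H^0$ equals $\textnormal{Hom}_\II(V, Q, \FF, \GG)$ and whose Euler characteristic is zero under the codimension condition (\ref{CodimCondition}); so $H^0 \neq 0$ precisely when $H^1 \neq 0$. Modifying Schofield's Hom/Ext long exact sequence so that his Hom and Ext play the roles of $H^0$ and $H^1$ of the complex, I expect a short exact sequence of complexes associated with the inclusion $T \subseteq V$ to transfer non-vanishing of $H^0(C^{\bullet}_{\FF, \GG})$ into non-vanishing of $H^1$ of the analogous complex built from the induced parabolic data on $T$. Using Lemma \ref{ModuliDimension}(4)--(5) together with Lemma \ref{GeneralConfiguration}, I would then show that as $\FF$ varies over the preimage of $Z$ inside the paired universal intersection, the induced flags $\FF|_T$ sweep a dense open of $\Flags{T}$, so the traded data lives in a generic configuration. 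Proposition \ref{TangentCondition} applied to this generic configuration forces the traded $H^0$ (hence $H^1$) to vanish, contradicting what was extracted from the base point.

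The main obstacle is this third paragraph: making the Schofield-style trade precise for two-step complexes, and above all verifying that although $\FF$ is special (lying over $Z$), its restriction to the traded subspace $T$ is nevertheless generic in $\Flags{T}$. This is exactly what the paired parameter spaces and the dimension formulas of Lemma \ref{ModuliDimension} are erected for, and choosing the right paired index-set data $(\II, \KK, \JJ)$ so that the dimension count actually produces a contradiction --- rather than being merely consistent --- is the delicate combinatorial heart of the argument. Once the contradiction is obtained, $\dim \MM_\II \leq 1$; combined with $h^0(\MM_\II, \LL_\II) = 2$, the discussion preceding the statement yields $(\MM_\II, \LL_\II) \cong (\Pone, \OO(1))$ as claimed.
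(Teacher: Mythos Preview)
Your overall architecture --- contradiction via base locus, theta sections, two independent sections giving kernels $S,S'$ with intersection $T$, a Schofield-style trade, and a genericity-based vanishing --- matches the paper. But two points are off in a way that would prevent the argument from closing.

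First, the trade goes to $S=\ker\phi$, not to $T$. Proposition~\ref{H1Prop} gives an \emph{isomorphism} $H^1(\AAA(V,Q,\FF,\GG,\vartheta(\II)))\cong H^1(\AAA(S,Q,\FF(S),\GG,\vartheta(\tilde\II)))$ with $\tilde I^p_a=I^p_{K^p_a}-K^p_a+a$; it is an $H^1\to H^1$ statement, not a transfer from $H^0$ to $H^1$, and it uses only the single kernel $S$. The role of $T$ is different: the induced flags $\FF(S)$ are \emph{not} generic in $\Flags{S}$, because they are forced to admit a subspace (namely $T$) in a fixed Schubert position $\NN$. What the paper proves (Proposition~\ref{InducedGenericAsPossible}) is that $(\FF(S),\GG)$ is generic in the locus $Z_{S,\NN}\times\Flags{Q}$ of flags satisfying that constraint --- not in all of $\Flags{S}\times\Flags{Q}$. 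So one cannot invoke Proposition~\ref{TangentCondition} directly on $S$.

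Second, and consequently, the vanishing of the traded $H^1$ does not follow from a dimension count on the parameter spaces; it requires a variant of Horn's conjecture (Proposition~\ref{NewHorn}) valid for flags generic only in $Z_{S,T,\NN}$. That proposition has as hypothesis a family of inequalities, one for every nonzero subspace $R\subseteq S$, and those inequalities are precisely the parabolic semistability condition $\mu_R\le\mu_V=n-r$ applied to $R$ viewed inside $V$. Your outline never uses the semistability of $\FF$; without it there is no mechanism forcing $H^1(\CC_S)=0$, and the contradiction does not materialize. The ``delicate combinatorial heart'' you anticipate is not a dimension balance in Lemma~\ref{ModuliDimension}, but rather this inductive Horn-type vanishing under constrained genericity, fed by semistability.
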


\end{subsection}

\begin{subsection}{Theta Sections of $\LL_\II$}\label{ThetaSection}
Given $\GG\in\Flags{Q}$, Belkale \cite{Bel04_2} constructs a section $\theta(Q,\GG)$ in $H^0(\MM_{\II},\LL_\II)$. In his construction, there is an open dense subset of $(\Flags{Q})^{h^0(\MM_{\II},\LL_\II)}$ such that for $\GG_1,...,\GG_{h^0}$ in this subset, the set $\{\theta(Q,\GG_1),...,\theta(Q,\GG_{h^0})\}$ gives a basis of $H^0(\MM_{\II},\LL_\II)$. For our purposes, the only other important property of these sections is the vanishing loci of their pullbacks to $\Flags{V}$. Denoting the pullback also by $\theta(Q,\GG)$, we have that $\theta(Q,\GG)$ vanishes at $\FF$ if and only if $\Hom{I}{V}{Q}{F}{G}\neq0$.
\end{subsection}

\end{section}

\begin{section}{Two-Step Complexes}\label{TwoStep}
The proof of Theorem \ref{MainTheoremTranslated} turns on dimension counts of spaces $\Hom{I}{V}{Q}{F}{G}$. If $\MM_{\II}$ exceeds the dimension predicted, there will be a nonempty closed locus $Z$ in $(\Flags{V})^{SS}$ where $\Hom{I}{V}{Q}{F}{G}$ is nonzero for general choice of $(\FF,\GG)\in Z\times\Flags{Q}$ (note that $\Hom{I}{V}{Q}{F}{G}$ is the ``certain vector space $H$ depending on $\FF$'' of the introduction). The dimension counting techniques below will expose this as a contradiction.

To better organize our computations, we introduce two-step complexes. For flags $F_\bullet\in\textnormal{Fl}(M)$, $G_\bullet\in\textnormal{Fl}(Q)$, and a nondecreasing sequence of nonnegative integers 
\[\theta=(\theta_1\leq\theta_2\leq...\leq\theta_m\leq n-r),\]
define
\begin{equation*}
P_\theta(F_\bullet,G_\bullet):=\{\psi\in\textnormal{Hom}(M,Q)|\psi(F_a)\subseteq G_{\theta_a}\textnormal{ for }a=1,...,m\}.
\end{equation*}
It is clear that $\dim P_\theta=\sum_{a=1}^m\theta_a$. If $\FF$ is an $s$-tuple of flags on $M$, $\GG$ likewise on $Q$, and $\vartheta$ an $s$-tuple of nondecreasing sequences as above, we define the two-step complex:
\begin{equation}
\AAA(M,Q,\FF,\GG,\vartheta):=(0\rightarrow\textnormal{Hom}(M,Q)\xrightarrow{\gamma}\oplus_{p=1}^s\frac{\textnormal{Hom}(M,Q)}{P_{\theta^p}(F^p_\bullet,G^p_\bullet)}\rightarrow 0)
\end{equation}
We denote
\begin{equation*}
H^0(\AAA(M,Q,\FF,\GG,\vartheta))=\ker(\gamma)\textnormal{\phantom{xx}and\phantom{xx}}H^1(\AAA(M,Q,\FF,\GG,\vartheta))=\textnormal{coker}(\gamma),
\end{equation*}
with the corresponding lowercase $h^0$ and $h^1$ for the dimensions of these, as usual. We also define the Euler characteristic $\chi=h^0-h^1$, which is the same as the difference of dimensions between the domain and codomain of $\gamma$.  One easily computes:
\begin{equation*}
\chi(\AA(M,Q,\FF,\GG,\vartheta))=m(n-r)-\sum_{p\in S}\sum_{a=1}^{m}(n-r-\theta^p_a).
\end{equation*}
For an $s$-tuple of index sets $\HH$, let us define $s$-tuples of nondecreasing sequences $\vartheta(\HH)$ by the prescription $\theta^p_a=H^p_a-a$. In this case,
\begin{equation}\label{HomAsH0}
H^0(\AAA(M,Q,\FF,\GG,\vartheta(\HH)))=\Hom{H}{M}{Q}{F}{G}.
\end{equation}

Let $R$ be an $e$-dimensional subspace of $M$ in Schubert position $\EE$ with respect to the flags $\FF$. The natural restriction map $\rho:\textnormal{Hom}(M,Q)\rightarrow\textnormal{Hom}(R,Q)$ is such that:
\[\rho(P_{\theta(H^p)}(F^p_\bullet,G^p_\bullet))=P_{\theta(Y^p)}(F^p(R)_\bullet,G^p_\bullet),\]
where $\YY\in\binom{[n-r+e]}{e}^s$ is given by 
\begin{equation}\label{IndexSetY}
Y^p_a=H^p_{E^p_a}-E^p_a+a\textnormal{ for } a=1,...,e,\textnormal{ } p=1,...,s.
\end{equation}
If $\CC=\AAA(M,Q,\FF,\GG,\vartheta(\HH))$ and $\CC(R)=\AAA(R,Q,\FF(R),\GG,\vartheta(\YY))$, the above shows that $\rho$ induces a surjective map of complexes $\rho:\CC\twoheadrightarrow\CC(R)$. From this, we obtain the following useful proposition. It says roughly that if $\FF$ is arbitrary, and $\GG$ is chosen in general position with respect to $\FF$, then the dimension of $\Hom{H}{M}{Q}{F}{G}$ (the quantity of interest) is controlled by the dimension of $\textnormal{Hom}_\YY(R,Q,\FF(R),\GG)$, where $R$ is a certain subspace of $M$.

\begin{prop}\label{H1Prop}
Fix $\FF\in\Flags{M}$. Let $\mathfrak{O}=\mathfrak{O}_{\FF,\HH}$ be an open subset of $\Flags{Q}$ such that $\textnormal{hd}_\HH(\FF,\GG)=(D,e,\EE)$ is constant over $\GG\in\mathfrak{O}$ and such that the morphism
\begin{equation}
\textnormal{\textbf{H}}_{\HH,\EE}|_\mathfrak{O}=\textnormal{\textbf{H}}_{\HH,\EE}\times_{\Flags{M}\times\Flags{Q}}(\{\FF\}\times\mathfrak{O})\rightarrow\{\FF\}\times\mathfrak{O}
\end{equation}
is flat and surjective. A nonempty such $\mathfrak{O}$ exists by Lemma \ref{GeneralConfiguration} and generic flatness. If $\GG$ is in $\mathfrak{O}$ and $\psi$ is a general element of $\textnormal{Hom}_\HH(M,Q,\FF,\GG)$ with kernel $R$, then the restriction $\rho$ induces an isomorphism $H^1(\CC)\tilde{\rightarrow} H^1(\CC(R))$.
\end{prop}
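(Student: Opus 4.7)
My plan is to prove the proposition by constructing a short exact sequence of two-step complexes $0 \to \tilde{\CC} \to \CC \to \CC(R) \to 0$, where $\tilde{\CC} := \AAA(M/R, Q, \bar{\FF}, \GG, \vartheta(\HH))$ and $\bar{F}^p_a := (F^p_a + R)/R$ is the induced partial flag on $M/R$. The degree-zero piece of this SES is the standard $0 \to \textnormal{Hom}(M/R, Q) \to \textnormal{Hom}(M, Q) \xrightarrow{\rho} \textnormal{Hom}(R, Q) \to 0$. For degree one, I would verify that the kernel of $\oplus_p\textnormal{Hom}(M,Q)/P_{\theta(H^p)}(F^p_\bullet, G^p_\bullet) \to \oplus_p\textnormal{Hom}(R,Q)/P_{\theta(Y^p)}(F^p(R)_\bullet, G^p_\bullet)$ equals $\oplus_p\textnormal{Hom}(M/R,Q)/P_{\theta(H^p)}(\bar{F}^p_\bullet, G^p_\bullet)$. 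The essential input is the surjectivity $P_{\theta(H^p)}(F^p_\bullet, G^p_\bullet) \twoheadrightarrow P_{\theta(Y^p)}(F^p(R)_\bullet, G^p_\bullet)$ under restriction (stated immediately before the proposition): this gives $\{\phi \in \textnormal{Hom}(M,Q) : \phi|_R \in P_{\theta(Y^p)}(F^p(R)_\bullet, G^p_\bullet)\} = P_{\theta(H^p)}(F^p_\bullet, G^p_\bullet) + \textnormal{Hom}(M/R, Q)$, and modding by $P_{\theta(H^p)}(F^p_\bullet, G^p_\bullet)$ yields the desired identification.

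Since all three complexes have length two, the long exact sequence terminates as $H^1(\tilde{\CC}) \to H^1(\CC) \to H^1(\CC(R)) \to 0$, so it suffices to prove $H^1(\tilde{\CC}) = 0$. I will do this by showing $\dim H^0(\tilde{\CC}) = \chi(\tilde{\CC})$. Note that $H^0(\tilde{\CC}) = \{\phi \in H^0(\CC) : \phi|_R = 0\}$, and its open subset $\{\phi : \ker\phi = R\}$ is nonempty (containing $\psi$) hence dense, so both have the same dimension. This latter set is the fiber over $\GG$ of the map from the fiber of $\textnormal{\textbf{H}}_{\HH,\EE} \to \textnormal{\textbf{U}}_\EE$ above $(R, \FF)$ down to $\Flags{Q}$. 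Combining the smoothness of Lemma \ref{ModuliDimension}(2) with the flatness of $\textnormal{\textbf{H}}_{\HH,\EE}|_\mathfrak{O} \to \{\FF\} \times \mathfrak{O}$ yields
\[\dim H^0(\tilde{\CC}) = (m-e)(n-r) + \sum_p \sum_{a=1}^e (n-r + E^p_a - H^p_{E^p_a}) - \sum_p |\lambda(H^p)|.\]

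For the Euler characteristic, I would compute $\dim P_{\theta(H^p)}(\bar{F}^p_\bullet, G^p_\bullet)$ directly. A basis of $M/R$ adapted to $\bar{F}^p$ (which jumps exactly at indices $a \notin E^p$) gives $\dim P_{\theta(H^p)}(\bar{F}^p_\bullet, G^p_\bullet) = \sum_a \theta(H^p)_a - \sum_{a=1}^e \theta(H^p)_{E^p_a}$, whence
\[\chi(\tilde{\CC}) = (m-e)(n-r)(1-s) + \sum_p \sum_a \theta(H^p)_a - \sum_p \sum_{a=1}^e \theta(H^p)_{E^p_a}.\]
Expanding both sides via $\theta(H^p)_a = H^p_a - a$ and $\sum_a \theta(H^p)_a = m(n-r) - |\lambda(H^p)|$ shows the two expressions agree, concluding $H^1(\tilde{\CC}) = 0$. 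The main technical obstacle I anticipate lies in the geometric part of the dimension count: extracting $\dim H^0(\tilde{\CC})$ from the flatness and smoothness hypotheses requires careful tracking of which composite projection is flat at $(R, \FF, \GG, \psi)$. The rest is routine bookkeeping.
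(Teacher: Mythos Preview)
Your short exact sequence $0\to\tilde\CC\to\CC\to\CC(R)\to0$ is correctly set up, and reducing the proposition to $H^1(\tilde\CC)=0$ is a clean reformulation. The difficulty is that the dimension count you propose for $h^0(\tilde\CC)$ does not follow from the two ingredients you cite. Smoothness of $\textnormal{\textbf{H}}_{\HH,\EE}\to\textnormal{\textbf{U}}_\EE$ gives you the dimension $d$ of the fibre $X_R$ over $(R,\FF)$; flatness of $\textnormal{\textbf{H}}_{\HH,\EE}|_\mathfrak{O}\to\mathfrak{O}$ gives you the fibre dimension $D$ over $\GG$. Neither controls the fibre over the pair $(R,\GG)$. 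Concretely, $\dim H^0(\tilde\CC)=d-\dim\Flags{Q}$ would require the projection $X_R\to\Flags{Q}$ to be submersive at $(\GG,\psi)$, and nothing in the hypotheses forces this. Chasing the long exact sequence, your claim $h^1(\tilde\CC)=0$ is equivalent to surjectivity of the restriction $\rho^0:H^0(\CC)\to H^0(\CC(R))$, which is genuinely stronger than the proposition's conclusion (that $\rho^1$ is an isomorphism) and is not obviously true for an arbitrary ``general element'' $\psi$ in the sense of Definition~\ref{GeneralElement}.

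The paper sidesteps this by never attempting to compute $h^0(\tilde\CC)$. Instead it combines the two parameter-space counts to get the exact equality $h^0(\CC)=\dim U_\FF+\chi(\CC)-\chi(\CC(R))$, where $U_\FF$ is the image of $\textnormal{\textbf{H}}_{\HH,\EE}|_\mathfrak{O}$ in $\cell_\EE(\FF)$, and then bounds $\dim U_\FF$ above by the Zariski tangent space $T_R\cell_\EE(\FF)$, which in turn injects into $H^0(\CC(R))$ via post-composition with $\bar\psi$. This yields $h^1(\CC)\le h^1(\CC(R))$; the Snake Lemma supplies the reverse inequality. The tangent-space injection is the mechanism your outline is missing: it is what converts the geometry of $U_\FF$ into an estimate involving $H^0(\CC(R))$, without ever needing control over the individual fibre $X_{R,\GG}$.
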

\begin{proof}
Recall the parameter spaces of Section \ref{Parameter Spaces}. We have by Lemma \ref{ModuliDimension} that $\UnivHom{H}{E}$ is surjective and smooth over $\UnivInt{E}$ of relative dimension:
\begin{equation}\label{H1PropOne}
\textnormal{rel dim}(\UnivHom{H}{E}\rightarrow\UnivInt{E})=\dim\Flags{Q}+\chi(\CC)-\chi(\CC(R)).
\end{equation}
Under the natural projection $\UnivHom{H}{E}\rightarrow\Flags{M}\times\Flags{Q}$, the fiber over a point $(\FF,\GG')$ of $\{\FF\}\times\mathfrak{O}$ is a dense open subset (by choice of $\EE$) of $\textnormal{Hom}_\HH(M,Q,\FF,\GG')$. In particular, the fibers are irreducible. Since flat maps are open, it follows that $\UnivHom{H}{E}|_\mathfrak{O}$ itself is irreducible and that the fiber over $(\FF,\GG)$ has dimension $\dim\UnivHom{H}{E}|_\mathfrak{O}-\dim\Flags{Q}$. But we know from (\ref{HomAsH0}) that the dimension of this fiber equals $h^0(\CC)$, so:
\begin{equation}\label{H1PropTwo}
h^0(\CC)=\dim\UnivHom{H}{E}|_\mathfrak{O}-\dim\Flags{Q}.
\end{equation}
Let $U_\FF$ denote the open, irreducible image of $\UnivHom{H}{E}|_\mathfrak{O}$ in
\begin{equation}\label{H1PropTwoPointFive}
\cell_\EE(\FF)=\textbf{U}_\EE\times_{\Flags{V}}\{\FF\}.
\end{equation}
By smoothness of $\UnivHom{H}{E}$ over $\UnivInt{E}$, the scheme $\UnivHom{H}{E}\times_{\UnivInt{E}}U_\FF$ is equidimensional and smooth over $U_\FF$ of relative dimension given by (\ref{H1PropOne}). Since $\UnivHom{H}{E}|_\mathfrak{O}$ is an open subset of $\UnivHom{H}{E}\times_{\UnivInt{E}}U_\FF$, we have
\begin{equation}\label{H1PropThree}
\dim\UnivHom{H}{E}|_\mathfrak{O}=\dim U_\FF+\dim\Flags{Q}+\chi(\CC)-\chi(\CC(R)).
\end{equation}
By the description of $U_\FF$ as an open subset of the Schubert intersection (\ref{H1PropTwoPointFive}), we can combine (\ref{H1PropTwo}) and (\ref{H1PropThree}) to obtain:
\begin{equation}\label{H1PropFour}
h^0(\CC)\leq\dim(\cell_\EE(\FF)\textnormal{ at }R)+\chi(\CC)-\chi(\CC(R)),
\end{equation}
where the first summand above should be understood as the dimension of the largest irreducible component of $\cell_\EE(\FF)$ passing through $R$.

The first summand in (\ref{H1PropFour}) is bounded by the dimension of the Zariski tangent space to $\cell_\EE(\FF)$ at $R$, which is given by
\[\textnormal{Hom}_\EE(R,M/R,\FF(R),\FF(M/R)).\]
The chosen map $\psi:M\rightarrow Q$ induces an injection:
\begin{equation}\label{H1PropFive}
\textnormal{Hom}_\EE(R,M/R,\FF(R),\FF(M/R))\hookrightarrow \textnormal{Hom}_\YY(R,Q,\FF(R),\GG),
\end{equation}
where $\YY$ is as in (\ref{IndexSetY}). It is easy to see that
\begin{equation}\label{H1PropSix}
\textnormal{Hom}_\YY(R,Q,\FF(R),\GG)=H^0(\CC(R)).
\end{equation}
Thus, it follows from (\ref{H1PropFour}) and (\ref{H1PropFive}) that
\[h^0(\CC)\leq h^0(\CC(R))+\chi(\CC)-\chi(\CC(R)).\]
This rearranges to the inequality $h^1(\CC)\leq h^1(\CC(R))$. But $\rho:H^1(\CC)\rightarrow H^1(\CC(R))$ is surjective by the Snake Lemma, so the proposition follows.
\end{proof}

The reader may wish to jump now to Appendix \ref{HornAppendix}. The proof there of the Horn conjecture uses Proposition \ref{H1Prop} in a straightforward way. In that regard, it serves as a nice warm-up for the more complicated argument of Section \ref{MainProof}.
\end{section}

\begin{section}{Outline of the Proof of Theorem \ref{MainTheoremTranslated}}\label{Outline}
The argument in Section \ref{MainProof} runs roughly as follows. We assume that Theorem \ref{MainTheoremTranslated} is false, i.e. that $\MM_\II$ has dimension at least 2. Then, $\LL_\II$ has a base locus, which we can lift by $\pi$ to $\Flags{V}$. Take an irreducible component $Z$ which meets the open, semistable part of $\Flags{V}$. Then, for general $(\FF,\GG)$ in $Z\times\Flags{Q}$, it is easy to see that $\textnormal{Hom}_\II(V,Q,\FF,\GG)\neq0$ (Lemma \ref{Contradiction}). This will end up being contradicted.

Using the hypothesis $H^0(\LL_\II)=2$, we find (Proposition \ref{DominantMap}) that $Z$ is dominated by $\UnivPairedHom{I}{K}{J}$ for a certain choice of $\KK,\JJ$. Choose a general point $\FF$ in $Z$ and a general point $(\FF,\GG,\GG',\phi,\phi')$ in $\UnivPairedHom{I}{K}{J}$ over $\FF$. Set $S=\ker\phi$, $S'=\ker\phi'$, $T=S\cap S'$. 

The goal is to show that $\Hom{I}{V}{Q}{F}{G}=0$. The quantity $\textnormal{Hom}_\II$ is computable when the flags are generic (cf. \cite{Bel06}). Since $\FF$ is in $Z$, genericity cannot be assumed. However, Proposition \ref{H1Prop} gives a link between $\textnormal{Hom}$ for $V$ with flags $\FF$ and $\GG$ to $\textnormal{Hom}$ for $S$ with flags $\FF(S)$ and $\GG$. It is more convenient to express the link in terms of $H^1$'s, but these relate directly to the $\textnormal{Hom}$'s in that they indicate their deviations from the expected value. Specifically, Proposition \ref{H1Prop} applied to $\phi$ tells us that:
\[H^1(\AAA(V,Q,\FF,\GG,\vartheta(\II)))\cong H^1(\AAA(S,Q,\FF(S),\GG,\vartheta(\tilde{\II}))).\]
Again, if $(\FF(S),\GG)$ was general, the right hand side would be computable, in fact, zero by Horn's conjecture. But $\FF(S)$ is required to have a subspace $T$ of $S$ in a certain Schubert position, so it cannot be assumed generic. However, a variant of Horn's conjecture is proven in Section \ref{DescendToS}, for which $(\FF(S),
\GG)$ is general enough.

We conclude that the right hand side of the above equation is $0$. Hence, the left hand side is $0$ and $\Hom{I}{V}{Q}{F}{G}$ has its expected dimension. A consequence of the codimension condition on $\II$ is that the expected dimension is $0$. Contradiction.

\end{section}

\begin{section}{Proof of Theorem \ref{MainTheoremTranslated}}\label{MainProof}
Assume $\II$ satisfies the hypotheses of Theorem \ref{MainTheoremTranslated} and suppose to the contrary that $\dim\MM_{\II}\geq2$. Since $\LL_\II$ is ample and $\MM_{\II}$ is projective and normal, it follows that there is a nonempty base locus $Z'$ of $\MM_{\II}$ where all sections of $\LL_\II$ vanish. Fix once and for all an irreducible component of the preimage of $Z'$ in $(\Flags{V})^{SS}$ and take its closure $Z$ in $\Flags{V}$. Since $Z$ contains semistable points, there is a Zariski open subset $U_Z$ such that for all $\FF$ in $U_Z$, the parabolic vector space $(V,\FF,n-r,\lambda(\II))$ is semistable. Let $U_{\Flags{Q}}$ be the open subset of $\Flags{Q}$ such that $\theta(Q,\GG)$ (see Section \ref{ThetaSection}) is not the zero section of $\LL_\II$ for any $\GG$ in $U_{\Flags{Q}}$. We establish the statement to be contradicted.

\begin{lem}\label{Contradiction}
If $(\FF,\GG)\in U_Z\times U_{\Flags{Q}}$, the vector space $\Hom{I}{V}{Q}{F}{G}$ is nonzero.
\end{lem}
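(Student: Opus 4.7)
The plan is to unwind the definitions and invoke the theta section machinery of Section \ref{ThetaSection} directly. The whole point of introducing $Z$ as the preimage of the base locus $Z'$ of $\LL_\II$ and of introducing the theta sections is precisely to produce this kind of non-vanishing of $\textnormal{Hom}_\II$.

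First, I would check that $\pi(\FF)\in Z'$ whenever $\FF\in U_Z$. By construction, $Z$ is the closure in $\Flags{V}$ of an irreducible component $Z_0$ of $\pi^{-1}(Z')\subseteq(\Flags{V})^{SS}$. The set $U_Z$ sits inside $(\Flags{V})^{SS}$, so $\pi$ is defined on $U_Z$. Since $Z'$ is closed in the projective variety $\MM_{\II}$ and $\pi^{-1}(Z')$ is closed in $(\Flags{V})^{SS}$, the intersection $Z\cap(\Flags{V})^{SS}$ coincides with $Z_0$ (by irreducibility considerations), and in particular lies in $\pi^{-1}(Z')$. Hence $\pi(\FF)\in Z'$ for every $\FF\in U_Z$.

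Next, I would use the defining property of the base locus: every global section of $\LL_\II$ vanishes on $Z'$. For any $\GG\in\Flags{Q}$ the theta section $\theta(Q,\GG)\in H^0(\MM_{\II},\LL_\II)$ therefore vanishes at $\pi(\FF)$, and consequently its pullback $\pi^{*}\theta(Q,\GG)$ vanishes at $\FF$ as a section of $\tilde\LL_\II|_{(\Flags{V})^{SS}}$. By the vanishing criterion recalled at the end of Section \ref{ThetaSection}, the pulled-back section vanishes at $\FF$ if and only if $\Hom{I}{V}{Q}{F}{G}\neq 0$, so we conclude the lemma.

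Strictly speaking, the condition $\GG\in U_{\Flags{Q}}$ is not needed to force $\textnormal{Hom}_\II\neq 0$ — that follows from the above for \emph{any} $\GG$. I would still include $U_{\Flags{Q}}$ in the statement because it guarantees that $\theta(Q,\GG)$ is not itself the zero section, which is the nontrivial content that is used subsequently in the argument (so that the vanishing of $\theta(Q,\GG)$ at $\FF$ reflects the genuine failure of basepoint-freeness rather than a trivial coincidence). There is no real obstacle here; the only thing requiring care is the set-theoretic verification that the closure operation taking $Z_0\subseteq(\Flags{V})^{SS}$ to $Z\subseteq\Flags{V}$ does not move semistable points out of $\pi^{-1}(Z')$, which is immediate from continuity of $\pi$ and closedness of $Z'$.
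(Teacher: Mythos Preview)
Your proof is correct and follows essentially the same approach as the paper: both use that $\FF\in U_Z$ maps to the base locus $Z'$, so every theta section $\theta(Q,\GG)$ vanishes at $\FF$, and then invoke the vanishing criterion of Section~\ref{ThetaSection} to conclude $\Hom{I}{V}{Q}{F}{G}\neq0$. Your version is more explicit about why semistable points of $Z$ land in $\pi^{-1}(Z')$ and about the role (or non-role) of $U_{\Flags{Q}}$, but there is no substantive difference in strategy.
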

\begin{proof}
The divisor of $\Flags{V}$ associated to $\theta(Q,\GG)$ is
\[D_\GG=\{\FF'\in\Flags{V}|\Hom{I}{V}{Q}{F'}{G}\neq0\}\]
Since $\FF\in U_Z$ maps into the base locus of $\LL_\II$, every such divisor passes through $\FF$.
\end{proof}

By Lemma \ref{GeneralConfiguration} and generic flatness, there is a largest nonempty open subset $\bar{U}$ of $Z\times\Flags{Q}\times\Flags{Q}$ over which the $\textnormal{Hom}'_\II$ data is constant, say $\textnormal{hd}'_\II(\FF,\GG,\GG')=(D,f,g,\KK,\JJ)$ for all $(\FF,\GG,\GG')$ in $\bar{U}$, and such that $\UnivPairedHom{I}{K}{J}|_{\bar{U}}\rightarrow\bar{U}$ is flat and surjective. Note by Lemma \ref{Contradiction}, we have strict inequality $f<r$. 

\begin{prop}\label{DominantMap}
The morphism
\begin{equation}\label{MapToFlags}
\textnormal{\textbf{H}}_{\II,\KK,\JJ}\rightarrow \Flags{V}\times\Flags{Q}\times\Flags{Q}
\end{equation}
factors through a dominant map $\textnormal{pr}$ to $Z\times\Flags{Q}\times\Flags{Q}$.
\end{prop}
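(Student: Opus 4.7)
The plan is to deduce both parts of the claim — the factoring through $Z\times\Flags{Q}\times\Flags{Q}$ and the dominance of $\textnormal{pr}$ — from (i) the irreducibility of $\UnivPairedHom{I}{K}{J}$ supplied by Lemma \ref{ModuliDimension}, and (ii) the surjectivity of $\UnivPairedHom{I}{K}{J}|_{\bar U}\to\bar U$, which is built into the choice of $\bar U$. Write $\tau$ for the natural projection (\ref{MapToFlags}) sending $(S,S',T,\FF,\GG,\GG',\phi,\phi')$ to $(\FF,\GG,\GG')$.

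First I would observe that $Z\times\Flags{Q}\times\Flags{Q}$ is irreducible, since $Z$ is by construction the closure of a single irreducible component and flag varieties are irreducible; hence the nonempty open subset $\bar U$ is dense in it. Composing the irreducible morphisms (3)--(5) of Lemma \ref{ModuliDimension} likewise shows that $\UnivPairedHom{I}{K}{J}$ is irreducible. Consequently the preimage $\tau^{-1}(\bar U)$, being nonempty (by surjectivity over $\bar U$) and open in an irreducible scheme, is dense in $\UnivPairedHom{I}{K}{J}$.

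From density, the closure of $\tau(\UnivPairedHom{I}{K}{J})$ in $\Flags{V}\times\Flags{Q}\times\Flags{Q}$ coincides with the closure of $\tau(\tau^{-1}(\bar U))=\bar U$, namely $Z\times\Flags{Q}\times\Flags{Q}$ (a closed subset of $\Flags{V}\times\Flags{Q}\times\Flags{Q}$, since $Z$ is closed in $\Flags{V}$). Because any constructible set is contained in its closure, the image of $\tau$ already lies inside $Z\times\Flags{Q}\times\Flags{Q}$, which provides the factoring $\textnormal{pr}$; and since the image of $\textnormal{pr}$ contains the dense open $\bar U$, the factored morphism is dominant.

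I do not expect a real obstacle: the proposition is essentially a formality combining the irreducibility packaged in Lemma \ref{ModuliDimension} with the surjectivity designed into $\bar U$. The one point requiring care is remembering that the irreducibility of $Z$ — and thus the density of $\bar U$ in $Z\times\Flags{Q}\times\Flags{Q}$ — holds precisely because $Z$ was defined as the closure of a \emph{single} irreducible component of the preimage of the base locus in $(\Flags{V})^{SS}$, rather than of the whole preimage.
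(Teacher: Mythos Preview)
Your argument is correct and in fact more economical than the paper's own proof. Both proofs observe that the image of $\tau$ contains the dense open $\bar U$ (giving dominance once factoring is established), and both ultimately rely on the irreducibility of $\UnivPairedHom{I}{K}{J}$ coming from Lemma~\ref{ModuliDimension}. The difference lies in how the factoring through $Z\times\Flags{Q}\times\Flags{Q}$ is obtained. You argue purely topologically: the preimage $\tau^{-1}(\bar U)$ is nonempty open in an irreducible scheme, hence dense, so by continuity the entire image of $\tau$ is contained in $\overline{\bar U}=Z\times\Flags{Q}\times\Flags{Q}$. The paper instead projects further to $\Flags{V}$ and invokes the theta sections $\theta(Q,\GG)$, $\theta(Q,\GG')$ of Section~\ref{ThetaSection}: for a general point of the image these span $H^0(\LL_\II)$ (here the hypothesis $h^0=2$ enters), and they vanish at $\FF$ because $\phi,\phi'$ have positive rank; hence a dense open part of $\textnormal{pr}_1(\UnivPairedHom{I}{K}{J})$ lands in the inverse image of the base locus $Z'$, and irreducibility then forces the whole image into the specific component $Z$. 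Your route is shorter and avoids any appeal to the theta sections or to $h^0=2$ at this stage; the paper's route, while more circuitous, makes explicit the geometric reason---vanishing of a basis of sections---why points of the parameter space lie over the base locus.
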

\begin{proof}
Clearly by construction the image of the map (\ref{MapToFlags}) contains a dense open subset of $Z\times\Flags{Q}\times\Flags{Q}$ (namely $\bar{U}$). To prove the proposition, it suffices to show that the image of the projection $\textnormal{pr}_1:\UnivPairedHom{I}{K}{J}\rightarrow\Flags{V}$ 
lies in $Z$.

Let $\BB$ denote the image of $\textnormal{pr}$ in $\Flags{V}\times\Flags{Q}\times\Flags{Q}$. Since in particular, $\bar{U}\subset\BB$, we have that $\BB$ dominates $\Flags{Q}\times\Flags{Q}$. Thus, the general element $(\FF,\GG,\GG')\in\BB$ is such that $\theta(Q,\GG)$ and $\theta(Q,\GG')$ form a basis for $H^0(\LL_\II)$. If $(\phi,\phi')$ is any point of $\UnivPairedHom{I}{K}{J}$ over $(\FF,\GG,\GG')$, then $\textnormal{rk}\phi=\textnormal{rk}\phi'=r-f>0$. It follows that $\theta(Q,\GG)$, $\theta(Q,\GG')$ vanish at $\FF$. So $\FF$ is in some component of the inverse image of $Z'$ in $\Flags{V}$. In sum we have:
\begin{itemize}
\item The image of $\textnormal{pr}_1$  contains a dense open subset of $Z$.
\item The image of $\textnormal{pr}_1$ lies in the inverse image of $Z'$ in $\Flags{V}$.
\item The image of $\textnormal{pr}_1$ is irreducible (since $\UnivPairedHom{I}{K}{J}$ is). 
\end{itemize} 
It follows that $\textnormal{pr}_1(\UnivPairedHom{I}{K}{J})\subseteq Z$.
\end{proof} 

Since $\textnormal{pr}$ is dominant (\ref{DominantMap}), we may let $W\subseteq Z\times\Flags{Q}\times\Flags{Q}$ be a nonempty open subset such that
\begin{enumerate}\label{PropertiesOfW}
\item $\UnivPairedHom{I}{K}{J}$ is flat and surjective over $W$.
\item If $(\FF,\GG,\GG')\in W$, then $\FF$ is semistable with respect to $\II$.
\end{enumerate}
Pick a general point $(\FF,\GG,\GG',\phi,\phi')$ in $\mathbf{H}_{\II,\KK,\JJ}$ whose image lies in $W$, with $S=\ker\phi$, $S'=\ker\phi'$, and $T=S\cap S'$. Then, by property (1) of $W$ and Proposition \ref{H1Prop}, we have:
\begin{equation}\label{H1VEqualsH1S}
H^1(\AAA(V,Q,\FF,\GG,\vartheta(\II)))\cong H^1(\AAA(S,Q,\FF(S),\GG,\vartheta(\tilde{\II}))),
\end{equation}
where $\tilde{\II}$ is given by $\tilde{I}^p_a=I^p_{K^p_a}-K^p_a+a$. By Proposition \ref{InducedGenericAsPossible}, the induced flags $(\FF(S),\GG)$ are ``general enough" for application of Proposition \ref{NewHorn}. Moreover, the inequalities of Proposition \ref{NewHorn} follow from property (2) of $W$. Therefore, the right hand side of (\ref{H1VEqualsH1S}) is zero. We conclude that $\textnormal{Hom}_\II(V,Q,\FF,\GG)$ has its expected dimension, \textit{which is zero}, since $\II$ is assumed to satisfy the codimension condition (\ref{CodimCondition}). So by \ref{Contradiction}, we have arrived at a contradiction. QED.
\end{section}

\begin{section}{Vanishing of $H^1$ for $S$}\label{DescendToS}

In this section, we will prove the propositions needed in the last part of Section \ref{MainProof} to show that $H^1(\AAA(S,Q,\FF(S),\GG,\vartheta(\tilde{\II})))=0$. To do this, we need to know that if $(\FF,\GG,\GG',\phi,\phi')$ is general in $\UnivPairedHom{I}{K}{J}$, the flags $(\FF(\ker\phi),\GG)\in\Flags{\ker\phi}\times\Flags{Q}$ are ``as general as possible'' with respect to one another, given the condition that the induced flags on $\ker\phi$ will always have subspace of $\ker\phi$ in Schubert position $\NN$ (namely $\ker\phi\cap\ker\phi'$). To this end, fix some $S\in\textnormal{Gr}(f,V)$. Define $Z_{S,\NN}\subseteq\Flags{S}$ to be the set of flags $\FF_S$ such that $\cell_\NN(\FF_S)$ is nonempty. Then, $Z_{S,\NN}$ is locally closed and irreducible, as it is the image of $\UnivInt{N}(S)$ in $\Flags{S}$. The next proposition proves the statement about induced flags being ``as general as possible.'' See Section \ref{Parameter Spaces} for definitions. Note that a vertical bar and a subscript following a parameter space - for example, ``$\UnivPairedHom{I}{K}{J}|_S$'' - denotes the fiber over the last subscript, as usual.

\begin{prop}\label{InducedGenericAsPossible}
The map $\textnormal{\textbf{H}}_{\II,\KK,\JJ}|_S\rightarrow Z_{S,\NN}\times\Flags{Q}:(\FF,\GG,\GG',\phi,\phi')\mapsto(\FF(S),\GG)$ is dominant. 
\end{prop}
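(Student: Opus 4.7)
The plan is to factor the map through an intermediate parameter space and verify dominance in two stages, using Lemma~\ref{ModuliDimension} throughout. Explicitly, I would write
\begin{equation*}
\UnivPairedHom{I}{K}{J}|_S \xrightarrow{\alpha} \UnivPairedInt{K}{J}|_S \times \Flags{Q} \xrightarrow{\beta} Z_{S,\NN}\times\Flags{Q},
\end{equation*}
where $\alpha$ forgets $(\GG', \phi, \phi')$ while retaining $(\FF, S', T, \GG)$, and $\beta$ sends $((\FF, S', T), \GG)$ to $(\FF(S), \GG)$. Dominance of the composite then follows once $\alpha$ and $\beta$ are each shown dominant.

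For $\beta$, the $\Flags{Q}$ factor is preserved, so I reduce to showing that $\UnivPairedInt{K}{J}|_S \to Z_{S,\NN}$, $\FF \mapsto \FF(S)$, is dominant. By Lemma~\ref{ModuliDimension}(4), $\UnivPairedInt{K}{J}|_S$ is irreducible, and the compatibility $J^p_a = K^p_{N^p_a}$ forces its image to land in $Z_{S,\NN}$, since $S \in \cell_\KK(\FF)$ and $T = S \cap S' \in \cell_\JJ(\FF)$ imply $T \in \cell_\NN(\FF(S))$. For density I would construct a preimage of each generic target point: given $\FF_S \in Z_{S,\NN}$ with $T \in \cell_\NN(\FF_S)$, first extend $\FF_S$ to a flag $\FF$ on $V$ with $S \in \cell_\KK(\FF)$ (the induced-flag morphism from $\{\FF \in \Flags{V} : S \in \cell_\KK(\FF)\}$ to $\Flags{S}$ is generically surjective, a standard fact from flag variety geometry), and then invoke surjectivity of $\UnivPairedInt{K}{J}$ over $A_{f,f,g}$ to produce an $S' \in \cell_\KK(\FF)$ with $S \cap S' = T$ after possibly perturbing $\FF$.

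For $\alpha$, Lemma~\ref{ModuliDimension}(5) gives that $\UnivPairedHom{I}{K}{J}|_S \to \UnivPairedInt{K}{J}|_S$ is smooth and surjective. The fiber of $\alpha$ over $((\FF, S', T), \GG)$ consists of triples $(\GG', \phi, \phi')$ with $\phi \in \textnormal{Hom}_\II(V, Q, \FF, \GG)$ satisfying $\ker\phi = S$, together with symmetric data on the primed side. Dominance of $\alpha$ therefore reduces to showing that for generic $\GG \in \Flags{Q}$ and the fixed $(\FF, S)$ with $S \in \cell_\KK(\FF)$, a $\phi$ with $\ker\phi = S$ exists (and symmetrically for $\phi', \GG'$). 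This is the step I expect to be the main obstacle. It should follow from Lemma~\ref{ModuliDimension}(2): $\UnivHom{I}{K}(V, Q)$ is smooth and surjective over $\UnivInt{K}(V)$ with relative dimension including a full $\dim \Flags{Q}$, so the natural projection $\UnivHom{I}{K}(V, Q) \to \UnivInt{K}(V) \times \Flags{Q}$ is dominant by a dimension count, guaranteeing the required $\phi$ for generic $\GG$. Combining dominance of $\alpha$ and $\beta$ yields the proposition.
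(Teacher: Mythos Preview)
Your factorization through $\UnivPairedInt{K}{J}|_S \times \Flags{Q}$ breaks down at the $\alpha$ step, and the gap is not merely technical. The claim that $\UnivHom{I}{K}(V,Q) \to \UnivInt{K}(V) \times \Flags{Q}$ is dominant ``by a dimension count'' is false in general: using Lemma~\ref{ModuliDimension}(2) and the codimension condition $\sum_p |\lambda(I^p)| = r(n-r)$, the relative dimension of $\UnivHom{I}{K}$ over $\UnivInt{K}$ works out to $\dim\Flags{Q} + f(\mu_\KK - (n-r))$, where $\mu_\KK$ is the parabolic slope of a subspace in position $\KK$. Since generic points of $Z$ are semistable, one has $\mu_\KK \le n-r$, so this relative dimension is at most $\dim\Flags{Q}$, and typically strictly less. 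Hence the source of your proposed dominant map has dimension \emph{at most} that of the target, and the dimension count gives you nothing. Even granting equality, dominance would not follow without further argument. Concretely: for a generic $\GG$ and a \emph{specific} pair $(\FF,S)$ with $S \in \cell_\KK(\FF)$, there is no reason to expect an element of $\Hom{I}{V}{Q}{F}{G}$ whose kernel is exactly $S$.

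What your outline misses is that $\KK$ and $\JJ$ are not arbitrary index sets: they are the \emph{generic} $\textnormal{Hom}'_\II$ data over $Z \times \Flags{Q}^2$. This is precisely why the projection to $\Flags{Q}$ is dominant, and it cannot be recovered from the abstract structure of the parameter spaces alone. The paper's proof exploits this by first mapping into $(\UnivHom{I}{K}|_S)|_Z \times \textnormal{Hom}(V,Q)$, noting that the image dominates $\Flags{Q}$ by the very choice of $\KK$, and then fixing a generic $\GG$ and proving dominance onto $Z_{S,\NN}$ fiberwise. That last step is handled not by dimension counts but by a group action: the group $G_{S,S',T}^{\times s}$ (elements of $\textnormal{GL}(V)$ preserving $S,S',T$ and acting trivially on $V/S$) acts on both sides, equivariantly for the map, and transitively on $Z_{S,T,\NN}$. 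This transitivity argument is the essential ingredient you are missing. Your $\beta$ step is also imprecise (``possibly perturbing $\FF$'' is not an argument), but the decisive failure is in $\alpha$.
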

\begin{proof}
We have a map $\UnivPairedHom{I}{K}{J}|_S\rightarrow(\UnivHom{I}{K}(V)|_S)|_Z\times\textnormal{Hom}(V,Q)$ which sends the point $(\FF,\GG,\GG',\phi,\phi')$ to $(\FF,\GG,\phi,\phi')$ (Proposition \ref{DominantMap} guarantees that $\FF\in Z$).  Let $\UU$ denote the image. Now by choice of $\KK$, the map $\UU\rightarrow\Flags{Q}$ is dominant. Fix $\GG$ in the image. To prove the proposition, it suffices to show that $\UU|_{\GG}\rightarrow Z_{S,\NN}$ is dominant. 

Suppose $(\FF,\GG,\phi,\phi')\in\UU|_{\GG}$ is a point, with say $S'=\ker\phi'$, $T=S\cap S'$. Let $G_{S,S',T}$ be the largest subgroup of $\textnormal{GL}(V)$ which acts on $S$, $S'$, and $T$, and acts trivially on $V/S$. Suppose $\vec{g}\in G^{\times s}_{S,S',T}$. Then we observe that $\phi\in\textnormal{Hom}_{\II}(V,Q,\vec{g}\FF,\GG)$ since $G$ acts trivially on $V/S$, and that $S,S'\in\cell_\KK(\vec{g}\FF)$, $T\in\cell_\JJ(\vec{g}\FF)$. Finally, we note that given a homomorphism $\phi'$ with $\ker\phi'=S'\in\cell_\KK(\vec{g}\FF)$, one can construct $\GG'\in\Flags{Q}$ so that $\phi'\in\textnormal{Hom}_{\II}(V,Q,\vec{g}\FF,\GG')$ (this is how the moduli space $\UnivHom{I}{K}$ is built over $\UnivInt{K}$ and similarly $\UnivPairedHom{I}{K}{J}$ over $\UnivPairedInt{K}{J}$). We conclude that $(\vec{g}\FF,\GG,\phi,\phi')\in\UU|_\GG$, which is to say that $G^{\times s}_{S,S',T}$ acts on $\UU|_\GG$, and this action restricts to $(\UU|_\GG)|_{T}$. 

We remark that $(\UU|_{\GG})|_T$ is \textit{nonempty} for any $g$-dimensional subspace $T$ of $S$. One can see this by using the action on $\UU|_\GG$ of the stabilizer group $H_S$ of $S$ in $\textnormal{GL}(V)$ given by $h\cdot(\FF,\GG,\phi,\phi'):=(\{hF^p_a\}_{p=1}^s,\GG,\phi\circ h^{-1},\phi'\circ h^{-1})$. This takes a subspace $T\in\cell_{\JJ}(\FF)$ to $T'=hT\in\cell_\JJ(h\FF)$, and any such $T'$ can be realized as $hT$ for suitable $h\in H$. Thus, we have reduced the problem to showing $(\UU|_{\GG})|_T\rightarrow Z_{S,T,\NN}$ is dominant for all $T$ (see below the proof for the definition of $Z_{S,T,\NN}$).

Note that $G^{\times s}_{S,S',T}$ acts by restriction to a subgroup of  $\textnormal{GL}(S)^{\times s}$ on $Z_{S,T,\NN}$. As can be seen with a basis argument, this action is transitive, and the map $(\UU|_{\GG})|_T\rightarrow Z_{S,T,\NN}$ is equivariant with respect to the above actions. The desired dominance follows.
\end{proof}

For an $f$-dimensional vector space $S$, a $g$-dimensional subspace $T$, and a Schubert position $\NN\in\binom{[f]}{g}^s$, one has an irreducible, locally closed subvariety of $\Flags{S}$ given by:
\begin{equation*}
Z_{S,T,\NN}=\{\FF_S\in\Flags{S}|T\in\Omega^{\circ}_{\NN}(\FF_S)\}.
\end{equation*}
If $g=0$, define $Z_{S,T,\NN}=\Flags{S}$. The next proposition and its proof are variants on the formulation of the Horn conjecture and its proof that appears in Appendix \ref{HornAppendix}.

\begin{prop}\label{NewHorn}
Let $S$ be an $f$-dimensional vector space with $f\leq r$, $T$ a $g$-dimensional subspace, $Q$ an $(n-r)$-dimensional vector space, $\NN\in\binom{[f]}{g}^s$ a Schubert position in $S$, and $(\FF_S,\GG)$ a general element of $Z_{S,T,\NN}\times\Flags{Q}$ (see Lemma \ref{GeneralConfiguration}). Let $\tilde{\II}\in\binom{[n-r+f]}{f}^s$. Suppose for every nonzero subspace $R$ of $S$, one has the inequality:
\begin{equation*}
\sum_{p\in S}\sum_{a=1}^{\dim R}(n-r+X^p_a-\tilde{I}^p_{X^p_a})\leq\dim(R)(n-r),
\end{equation*}
where $\XX$ is the Schubert position of $R$ in $S$ with respect to $\FF_S$. Then the vector space $H^1(\AAA(S,Q,\FF_S,\GG,\vartheta(\tilde{\II})))$ is zero.
\end{prop}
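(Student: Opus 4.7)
The plan is to prove the proposition by induction on $f = \dim S$; the base case $f = 0$ is trivial because the two-step complex vanishes identically. For the inductive step, write $\CC = \AAA(S,Q,\FF_S,\GG,\vartheta(\tilde{\II}))$ and suppose for contradiction that $h^1(\CC) > 0$. Taking $R = S$ in the hypothesis (so that $\XX$ is the identity index set) yields $\chi(\CC) \geq 0$, hence $h^0(\CC) \geq h^1(\CC) > 0$, so $\textnormal{Hom}_{\tilde{\II}}(S,Q,\FF_S,\GG)$ is nonempty. Proposition \ref{H1Prop} then provides a general element $\psi$ of this space whose kernel $R$ has some Schubert position $\EE$ in $S$ with respect to $\FF_S$, together with an isomorphism $H^1(\CC) \cong H^1(\CC(R))$, where $\CC(R) = \AAA(R,Q,\FF_S(R),\GG,\vartheta(\YY))$ and $Y^p_a = \tilde{I}^p_{E^p_a} - E^p_a + a$. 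The case $R = 0$ is excluded because $\CC(R)$ would vanish and force $H^1(\CC(R)) = 0$; the case $R = S$ is excluded because then $\psi = 0$, contradicting $h^0(\CC) > 0$. Thus $R$ is a proper nonzero subspace of $S$.

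I would then apply the inductive hypothesis to $\CC(R)$, with the triple $(R, T \cap R, \NN')$ playing the role of $(S, T, \NN)$, where $\NN'$ is the Schubert position of $T \cap R$ in $R$ with respect to $\FF_S(R)$, and with $\YY$ in place of $\tilde{\II}$. The inequality hypothesis transfers cleanly: if $R' \subseteq R$ is a nonzero subspace with Schubert position $\XX'$ in $R$ with respect to $\FF_S(R)$, then its Schubert position in $S$ with respect to $\FF_S$ is $W^p_a = E^p_{{X'}^p_a}$, and substitution using the formula for $\YY$ gives
\[\sum_{p=1}^s \sum_{a=1}^{\dim R'} (n - r + {X'}^p_a - Y^p_{{X'}^p_a}) = \sum_{p=1}^s \sum_{a=1}^{\dim R'} (n - r + W^p_a - \tilde{I}^p_{W^p_a}) \leq \dim(R')(n-r),\]
the final inequality being the original hypothesis applied to $R' \subseteq S$.

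The main obstacle is verifying the remaining condition: that the induced pair $(\FF_S(R),\GG)$ is a general element of $Z_{R, T \cap R, \NN'} \times \Flags{Q}$. The plan here is to mirror the proof of Proposition \ref{InducedGenericAsPossible}. I would introduce an incidence variety over $Z_{S,T,\NN} \times \Flags{Q}$ whose points are quadruples $(\FF_S, \GG, \psi, R)$ with $R = \ker \psi \in \cell_\EE(\FF_S)$, and exhibit the subgroup of $\textnormal{GL}(S)^{\times s}$ whose tuples simultaneously preserve $R$, $T$, and $T \cap R$. This group acts on the incidence variety by translating $\FF_S$ and adjusting $\psi$ by precomposition with the inverse, and the induced action on $Z_{R, T \cap R, \NN'}$ is transitive on the fibers over a fixed copy of $T \cap R$ by a basis argument. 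Combining this with the independent generic variation of $\GG$ in $\Flags{Q}$ yields the required dominance. The inductive hypothesis then forces $H^1(\CC(R)) = 0$, contradicting the isomorphism above and completing the proof.
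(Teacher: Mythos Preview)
Your approach is essentially the paper's: induct on $f$, reduce via Proposition \ref{H1Prop} to the kernel $R$ of a general element, verify that the inequality hypothesis transfers to subspaces of $R$, and use a group action to establish that the induced pair $(\FF_S(R),\GG)$ is general in $Z_{R,T\cap R,\NN'}\times\Flags{Q}$. The paper takes $f=1$ as its base case with an explicit computation, but your $f=0$ base is fine, and your contradiction framing is logically equivalent to the paper's direct argument.

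There is one genuine slip in your genericity step. You propose to act on the incidence variety by an $s$-tuple $\vec g=(g^1,\ldots,g^s)\in\textnormal{GL}(S)^{\times s}$ preserving $R$, $T$, $T\cap R$, ``translating $\FF_S$ and adjusting $\psi$ by precomposition with the inverse.'' But $\psi$ is a single homomorphism $S\to Q$; there is no coherent way to precompose it with $s$ different inverses, and if you restrict to the diagonal subgroup you lose transitivity on $Z_{R,T\cap R,\NN'}$. The paper's remedy---and the one actually used in Proposition \ref{InducedGenericAsPossible}, which you cite---is to take the smaller subgroup of tuples that in addition act \emph{trivially} on $S/R$. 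Then for each $p$ and $v\in F^p_a$ one has $g^pv-v\in R=\ker\psi$, hence $\psi(g^pF^p_a)=\psi(F^p_a)\subseteq G^p_{\tilde I^p_a-a}$, so $\psi$ itself remains in $\textnormal{Hom}_{\tilde\II}(S,Q,\vec g\,\FF_S,\GG)$ with no adjustment needed. A basis argument (extend a basis of $T\cap R$ to one of $T$ by vectors transverse to $R$) shows this smaller group still surjects onto the stabilizer of $T\cap R$ in $\textnormal{GL}(R)$, hence its $s$-fold product acts transitively on $Z_{R,T\cap R,\NN'}$. With that correction your argument goes through and coincides with the paper's.
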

\begin{proof}
We proceed by induction on the dimension $f$ of $S$. If $f=1$, then $Z_{S,T,\NN}=\textnormal{pt}$, and $\GG$ is a general element of $\Flags{Q}$. In this case, $H^0(\AAA(S,Q,\FF_S,\GG,\vartheta(\tilde{\II})))$ is the space of all $\phi:S\rightarrow Q$ such that $\textnormal{Im}(\phi)$ is contained in $\bigcap_{p\in S}G_{\tilde{I}^p-1}$. Since $\GG$ is generic, we compute the dimension $h^0$ of this space to be 
\begin{equation*}
(n-r)-\sum_{p\in S}(n-r+1-\tilde{I}^p),
\end{equation*}
a nonnegative number by the inequality hypothesis. This number is also equal to $\chi(\AAA(S,Q,\FF_S,\GG,\vartheta(\tilde{\II})))$, so $h^1=0$, as desired.

Assume now that $f\geq2$. Let $\phi\in\textnormal{Hom}_{\tilde{\II}}(S,Q,\FF_S,\GG)$ be a general element. We observe that the inequality for $R=S$ can be expressed as $\chi(\AAA(S,Q,\FF_S,\GG,\vartheta(\tilde{\II})))\geq 0$. If the general element $\phi$ is zero, then $h^1=0$ follows, so we may as well assume $\phi\neq 0$. Let $\tilde{f}<f$ be the dimension of $\tilde{S}=\ker\phi$ and set $\tilde{T}=\tilde{S}\cap T$. Let $\YY$ be the Schubert position of $\tilde{S}$ in $S$ with respect to $\FF_S$, and let $\tilde{\NN}$ be the Schubert position of $\tilde{T}$ in $\tilde{S}$. By the genericity hypothesis on the flags and Proposition \ref{H1Prop}, we have 
\begin{equation}\label{NewHornEqn1}
H^1(\AAA(S,Q,\FF_S,\GG,\vartheta(\tilde{\II})))=H^1(\AAA(\tilde{S},Q,\FF_S(\tilde{S}),\GG,\vartheta({\tilde{\II}'}))),
\end{equation}
where $\tilde{\II}'\in\binom{[n-r+\tilde{f}]}{\tilde{f}}^s$ is defined by $\tilde{I}'^p_a=(\tilde{I}^p_{Y^p_a}-Y^p_a+a)$ for $a=1,...,\tilde{f}$.

Let $\tilde{G}$ be the subgroup of $\textnormal{GL}(S)$ consisting of those group elements which act on $T$, $\tilde{S}$, and $\tilde{T}$, and act trivially on $S/\tilde{S}$. If $\vec{g}\in\tilde{G}^{\times s}$, then it is easy to see that $(\vec{g}\FF_S,\GG)\in Z_{S,T,\NN}\times\Flags{Q}$ and $\phi\in\textnormal{Hom}_{\tilde{\II}}(S,Q,\vec{g}\FF_S,\GG)$. On the other hand, a straightforward argument with bases shows that $\tilde{G}^{\times s}$ acts transitively on the set of flags $Z_{\tilde{S},\tilde{T},\tilde{\NN}}$. Thus, the general pair $(\FF_S,\GG)\in Z_{S,T,\NN}\times\Flags{Q}$ induces a general pair $(\FF_S(\tilde{S}),\GG)\in Z_{\tilde{S},\tilde{T},\tilde{\NN}}$ (compare with the proof of Proposition \ref{InducedGenericAsPossible}).

We are now in position to apply the inductive hypothesis to the right hand side of (\ref{NewHornEqn1}). We need only check that the appropriate inequalities hold for all subspaces $\tilde{R}$ of $\tilde{S}$. But this follows immediately from regarding $\tilde{R}$ as a subspace of $S$, where inequalities are assumed to hold, and the fact that $\tilde{R}\subseteq\tilde{S}$ has Schubert position $\tilde{\XX}$ in $\tilde{S}$ with respect to $\FF_S(\tilde{S})$ if and only if $\tilde{R}$ has Schubert position $\YY_{\tilde{\XX}}$ in $S$ with respect to $\FF_S$.  
\end{proof}

\end{section}

\begin{appendix}

\begin{section}{Representability}\label{Representability}
The goal of this appendix is to prove Lemma \ref{ModuliDimension}. Points (1) and (2) are proven in \cite{Bel06} and \cite{Bel07}. We will prove (3), (4), and (5). To begin, we present a lemma from the appendix of \cite{Bel06}. The dimension count is new, but follows from Belkale's argument in a straightforward way. 

\begin{lem}\label{RepLemma}
Let $\PP$ be a vector bundle of rank $\rho$ on a scheme $Z$. Assume that $\PP$ has a subbundle $\Sigma$ which is filtered by subbundles $0=\Sigma_0\subseteq\Sigma_1\subseteq...\subseteq\Sigma_k=\Sigma$. Let $0=\ell_0\leq\ell_1\leq...\leq\ell_k\leq\rho$ be nonnegative integers. Let $\alpha:\textnormal{Sch}/Z\rightarrow\textnormal{Set}$ be the contravariant functor which associates to a scheme $T$ over $Z$ the set of complete filtrations $\FF_\bullet$ of $\PP|_{T}$ by subbundles such that the composite map \begin{equation*}
\Sigma_a|_{T}\rightarrow\Sigma|_{T}\rightarrow\PP|_{T}/\FF_{\ell_a}
\end{equation*}
is $0$ for all $a=0,...,k$. Then $\alpha$ is representable by a scheme $\textnormal{\textbf{A}}$ which is surjective and smooth over $Z$ of relative dimension
\begin{equation}\label{GeneralRelDim}
\dim\textnormal{Fl}({k^\rho})-\sum_{t=1}^{\rho-1}\textnormal{rk}(\Sigma_{c_t}),
\end{equation}
where $c_t=\textnormal{max}\{a|\ell_a\leq t\}$. Moreover, if $Z$ is irreducible, so is $\textnormal{\textbf{A}}$.

Also, if $\tilde{\alpha}$ is the same functor as $\alpha$ with the stricter condition that
\begin{equation*}
\Sigma_a|_{T}\rightarrow\Sigma|_{T}\rightarrow\PP|_{T}/\FF_{t}
\end{equation*}
is exact whenever $\ell_a\leq t \leq \ell_{a+1}-1$, then $\tilde{\alpha}$ is represented by an open, possibly empty subscheme $\tilde{\textnormal{\textbf{A}}}$  of $\textnormal{\textbf{A}}$, which is also surjective and smooth over $Z$, and irreducible if $Z$ is.
\end{lem}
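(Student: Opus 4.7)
The plan is to construct $\textnormal{\textbf{A}}$ inductively on the length $k$ of the filtration $0=\Sigma_0\subseteq\cdots\subseteq\Sigma_k=\Sigma$, realizing it at each stage as a tower of smooth, irreducible fibrations over $Z$. The base case $k=0$ (so $\Sigma=0$) is immediate: $\alpha$ is represented by the relative complete flag bundle $\textnormal{Fl}(\PP)\to Z$, which is smooth and surjective of relative dimension $\binom{\rho}{2}$, matching the formula since all $\Sigma_{c_t}=0$. Tautologically, at every stage, the constructed scheme agrees with the closed subscheme of $\textnormal{Fl}(\PP)$ cut out by the conditions $\Sigma_a\subseteq\FF_{\ell_a}$ (the vanishing loci of the sections $\Sigma_a\to\PP/\FF_{\ell_a}$), so it indeed represents $\alpha$.

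For the inductive step I factor $\alpha$ through the simpler functor $\alpha'$ which records only $\FF_{\ell_k}$ together with the single condition $\Sigma_k\subseteq\FF_{\ell_k}$. The functor $\alpha'$ is represented by the relative Grassmannian $\textnormal{Gr}(\ell_k-\textnormal{rk}(\Sigma_k),\PP/\Sigma_k)\to Z$, smooth and irreducible of relative dimension $(\ell_k-\textnormal{rk}(\Sigma_k))(\rho-\ell_k)$. Over its total space, carrying the universal rank-$\ell_k$ subbundle $\FF_{\ell_k}\subseteq\PP$, the remaining data in $\alpha$ splits as a product: a complete flag on the quotient $\PP/\FF_{\ell_k}$ subject to no further condition (a flag bundle of relative dimension $\binom{\rho-\ell_k}{2}$) and a complete flag on $\FF_{\ell_k}$ satisfying $\Sigma_a\subseteq\FF_{\ell_a}$ for $a=0,\dots,k-1$. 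Because $\Sigma_{k-1}\subseteq\Sigma_k\subseteq\FF_{\ell_k}$, this second piece is precisely the scheme furnished by the inductive hypothesis applied to $\FF_{\ell_k}$ of rank $\ell_k$, with filtration $\Sigma_0\subseteq\cdots\subseteq\Sigma_{k-1}$ and positions $\ell_0,\dots,\ell_{k-1}$. Composing these smooth irreducible fibrations yields $\textnormal{\textbf{A}}\to Z$ with the claimed properties.

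The dimension check then telescopes. Combining $(\ell_k-\textnormal{rk}(\Sigma_k))(\rho-\ell_k)+\binom{\rho-\ell_k}{2}+\binom{\ell_k}{2}=\binom{\rho}{2}-\textnormal{rk}(\Sigma_k)(\rho-\ell_k)$ with the inductive relative dimension $\binom{\ell_k}{2}-\sum_{t=1}^{\ell_k-1}\textnormal{rk}(\Sigma_{c_t})$, and observing that $\Sigma_{c_t}=\Sigma_k$ for $\ell_k\leq t\leq\rho-1$ (contributing exactly $\textnormal{rk}(\Sigma_k)(\rho-\ell_k)$ to the deficit), recovers $\binom{\rho}{2}-\sum_{t=1}^{\rho-1}\textnormal{rk}(\Sigma_{c_t})$. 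For the second statement, the strict exactness amounts to $\Sigma\cap\FF_t=\Sigma_a$ whenever $\ell_a\leq t\leq\ell_{a+1}-1$, which is an open condition (the intersection rank attaining its minimum) on $\textnormal{\textbf{A}}$. Hence $\tilde{\textnormal{\textbf{A}}}$ is an open subscheme of $\textnormal{\textbf{A}}$, automatically smooth, and irreducible when nonempty by openness in the irreducible $\textnormal{\textbf{A}}$. Working Zariski-locally on $Z$ where $\PP$ and the $\Sigma_a$'s trivialize, the open condition descends to the fiber $\textnormal{\textbf{A}}_0\subseteq\textnormal{Fl}(k^\rho)$ of $\textnormal{\textbf{A}}\to Z$, so its nonemptyness is uniform over $Z$ and surjectivity follows.

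The principal technical obstacle I anticipate is verifying that the fiber bundle decomposition in the inductive step genuinely respects the variation of $\FF_{\ell_k}$ across the Grassmannian --- that is, that piece (ii) assembles into a well-defined family over $\textnormal{Gr}(\ell_k-\textnormal{rk}(\Sigma_k),\PP/\Sigma_k)$ whose fibers agree with the inductive $\textnormal{\textbf{A}}$'s in a way compatible with the local $\textnormal{GL}(\PP)$-homogeneity of the Grassmannian. The rest --- tracking the nested closed conditions through the flag bundle, and the elementary but somewhat unilluminating binomial-coefficient manipulations that extract the stated closed-form dimension from the recursion --- is routine by comparison.
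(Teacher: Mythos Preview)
Your proof is correct. The paper does not supply its own argument for this lemma; it simply attributes the statement to the appendix of \cite{Bel06} and remarks that the dimension formula, while new, follows from Belkale's construction in a straightforward way. Your induction on the length $k$ of the filtration---first choosing $\FF_{\ell_k}\supseteq\Sigma_k$ via the relative Grassmannian $\textnormal{Gr}(\ell_k-\textnormal{rk}(\Sigma_k),\PP/\Sigma_k)$, then splitting the residual data into an unconstrained flag on $\PP/\FF_{\ell_k}$ and the inductively handled flag on $\FF_{\ell_k}$---is a natural route, and the telescoping dimension count is carried out accurately. The obstacle you anticipate is not a real one: because the lemma is formulated over an arbitrary base $Z$, the inductive hypothesis applies verbatim with $Z$ replaced by the total space of the Grassmannian and $\PP$ replaced by the universal $\FF_{\ell_k}$, so the family assembles correctly without any additional homogeneity argument.
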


\begin{rem}\label{BaseChangeRemark}
The schemes $\textnormal{\textbf{A}}$ base change properly, in the sense that if $Z'\rightarrow Z$ is a morphism, then $\textnormal{\textbf{A}}\times_Z Z'$ represents the functor corresponding to the pulled back bundle $\PP|_{Z'}$ with the pulled back filtration. In particular, if $Z'\rightarrow Z$ is a morphism of irreducible schemes, then $\tilde{\textnormal{\textbf{A}}}|_{Z'}$ is irreducible.
\end{rem}
\begin{rem}\label{DimensionRemark}
In the case where $0=\Sigma_0\subseteq\Sigma_1\subseteq...\subseteq\Sigma_k=\Sigma$ is a \textit{complete} filtration of $\Sigma$ (with all inclusions proper) and $0=\ell_0<\ell_1<...<\ell_k\leq\rho$ (with all inequalities strict, hence corresponding to some $L\in\binom{\rho}{k}$), the quantity $\sum_{t=1}^{\rho-1}\textnormal{rk}(\Sigma_{c_t})$ in (\ref{GeneralRelDim}) is equal to $k\rho-\sum_{a=1}^k\ell_a$. This may be rewritten as $|\lambda(L)|+\dim\textnormal{Fl}(k^k)$. Thus, the relative dimension of $\textnormal{\textbf{A}}$ over $Z$ is then:
\begin{equation}\label{SpecificRelDim}
\dim\textnormal{Fl}(k^{\rho})-|\lambda(L)|-\dim\textnormal{Fl}(k^k)
\end{equation}
(where the lower ``$k$'' of course refers to the field).
\end{rem}

Let $\KK$, $\JJ$, and $\NN$ be as in Section \ref{Parameter Spaces} (\ref{IndexSets}). We will build the scheme $\UnivPairedInt{K}{J}(V)$ in several steps. Recall that we would like $\UnivPairedInt{K}{J}$ on the level of points to be the set of all tuples $(S,S',T,\FF)$, where $S$ and $S'$ are $f$-dimensional subspaces of $V$ with $g$-dimensional intersection $T$, satisfying $S,S'\in\cell_\KK(\FF)$, $T\in\cell_\JJ(\FF)$. Thus, a natural starting point would be to consider the contravariant functor $\AAA_{f,f,g}:\textnormal{Sch}/k\rightarrow\textnormal{Set}$ which associates to each $k$-scheme $Y$ the set of triples consisting of two rank $f$ subbundles $\SSS$ and $\SSS'$ of $V\otimes\OO_Y$, and a rank $g$ subbundle $\TT$ of $V\otimes\OO_Y$ such that $\TT\rightarrow V\otimes\OO_Y$ is precisely the kernel of
\[V\otimes\OO_Y\rightarrow(V\otimes\OO_Y/\SSS)\oplus(V\otimes\OO_Y/\SSS').\]
The functor $\AAA_{f,f,g}$ is representable by a smooth, irreducible scheme $A_{f,f,g}$ of dimension (if nonempty) $2(f-g)(r-f)+g(r-g)$. 

Indeed, one builds $A_{f,f,g}$ by starting with the Grassmannian of $g$-dimensional subspaces of $V$, which has dimension $g(r-g)$. Using the tautological bundle on the Grassmannian, one can build a scheme over $\textnormal{Gr}(g,V)$ (smooth of relative dimension $(f-g)(r-f)$) whose fiber over $T$ is the set of all $f$-dimensional $S$ in $V$ containing $T$. Similarly, build over that scheme the scheme whose fiber over $(T,S)$ is the set of all $S'$ containing $T$. The locus where $T$ is precisely the intersection of $S$ and $S'$ is open. This proves (3) in Lemma \ref{ModuliDimension}.

We now define a functor $\BB:\textnormal{Sch}/A_{f,f,g}\rightarrow\textnormal{Set}$ which associates to $Y$ the same data as $\AAA_{f,f,g}$ with the additional data of $s$-many complete filtrations by subbundles $\{\FF_{\bullet}^{\TT,p}\}_{p=1}^s$ of $\TT$ (so $k$-points of $\BB$ look like $(S,S',T,\FF_T)$). This is clearly representable by the flag bundle
\[B=\textnormal{Fl}(\TT)\times_{A_{f,f,g}}..._{s\textnormal{-many}}...\times_{A_{f,f,g}}\textnormal{Fl}(\TT)\]
over $A_{f,f,g}$, which is irreducible, surjective, and smooth over $A_{f,f,g}$ of relative dimension $\dim\Flags{k^g}$.

Similarly, define a functor $\CC:\textnormal{Sch}/B\rightarrow\textnormal{Set}$ which associates to $Y$ the same data as $\BB$ with the additional data of $s$-many complete filtrations by subbundles $\{\FF_{\bullet}^{\SSS,p}\}_{p=1}^s$ of $\SSS$, subject to the constraints that for $p=1,...,s,a=0,...,g$, the kernel of $\TT\rightarrow\SSS/\FF_t^{\SSS,p}$ is precisely $\FF_a^{\TT,p}$ whenever $N^p_a\leq t<N^p_{a+1}$ - here we take $N^p_0=0, N^p_{g+1}=f+1$. The $k$-points of $\CC$ look like $(S,S',T,\FF_T,\FF_S)$ so that $\FF_\TT$ is the $s$-tuple of flags induced on $T$ by $\FF_S$ and $T$ is in Schubert position $\NN$ in $S$ with respect to $\FF_S$. 

The functor $\CC$ sits atop a tower of functors,
\[\CC=\CC_s\rightarrow\CC_{s-1}\rightarrow...\rightarrow\CC_2\rightarrow\CC_1\rightarrow\BB,\]
where $\CC_1$ associates to $Y$ the same data as $\BB$ with the additional data of a complete filtration by subbundles $\FF_\bullet^{\SSS,1}$ of $\SSS$, subject to the constraints that for $a=1,...,g$, the kernel of $\TT\rightarrow\SSS/\FF_t^{\SSS,1}$ is precisely $\FF_a^{\TT,1}$ whenever $N^1_a\leq t<N^1_{a+1}$. Apply Lemma \ref{RepLemma} and the subsequent remarks with $Z=B$, $\rho=f$, $k=g$, $\PP=\SSS$, $\Sigma=\TT$ with its universal complete filtration $\FF^{\TT,1}_{\bullet}$ on $B$, and $L=N^1$, to obtain a representing scheme $C_1$ which is irreducible, smooth, and surjective over $B$ of relative dimension $\dim\textnormal{Fl}(k^f)-|\lambda(N^1)|-\dim\textnormal{Fl}(k^g)$. Repeat this $s$-many times to obtain at the top of the tower a representing scheme $C$ for $\CC$. Then define a functor $\CC':\textnormal{Sch}/C\rightarrow\textnormal{Set}$ in the obvious way, so that its $k$-points are $(S,S',T,\FF_T,\FF_S,\FF_{S'})$ with the same Schubert conditions also for $\FF_{S'}$. Repeating the argument of $\CC$ for $\CC'$, one obtains a representing scheme $C'$ which is irreducible, smooth, and surjective over $B$ of relative dimension
\[2\dim\Flags{k^f}-2\dim\Flags{k^g}-2\sum_{p=1}^{s}|\lambda(N^p)|.\]

Finally, define a functor $\DD:\textnormal{Sch}/C'\rightarrow\textnormal{Set}$ which associates to $Y$ the same data as $\CC'$ with the additional data of $s$-many complete filtrations $\{\FF^p_\bullet\}_{p=1}^s$ of $V\otimes\OO_Y$, subject to the constraints that for $b=0,...,f$, the kernel of $\SSS\rightarrow V\otimes\OO_Y/\FF_t^p$ is precisely $\FF_b^{\SSS,p}$ and the kernel of $\SSS'\rightarrow V\otimes\OO_Y/\FF_t^p$ is precisely $\FF_b^{\SSS',p}$ whenever $K^p_b\leq t<K^p_{b+1}$. The points $\DD(k)$ correspond bijectively to the set we called $\UnivPairedInt{K}{J}$.

\begin{prop}\label{UnivInt is Representable}
Let $\DD$ be the functor defined above. Then $\DD$ is representable by a scheme which we call $\textnormal{\textbf{U}}_{\KK,\JJ}(V)$, which is smooth, surjective, and irreducible over $\textnormal{A}_{f,f,g}$ of relative dimension:
\begin{equation*}
\dim\Flags{V}+\sum_{p=1}^s(|\lambda(J^p)|-2|\lambda(K^p)|-2|\lambda(N^p)|).
\end{equation*}
This proves (4) of Lemma \ref{ModuliDimension}.
\end{prop}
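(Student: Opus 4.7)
The plan is to extend the tower $A_{f,f,g} \to B \to C \to C'$ by $s$ more layers introducing the flags $\FF^1_\bullet,\ldots,\FF^s_\bullet$ on $V\otimes\OO$ one at a time. Set $\DD^{(0)} = C'$, and inductively let $\DD^{(p)}$ represent the sub-functor of $\DD$ which adjoins only the first $p$ complete flags with their required Schubert compatibilities, so that $\DD^{(s)} = \DD$. It suffices to show each morphism $\DD^{(p)} \to \DD^{(p-1)}$ is representable by a smooth, irreducible, and surjective scheme of the claimed relative dimension; irreducibility then propagates through the tower via Remark \ref{BaseChangeRemark}.

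The technical difficulty at each step is that two Schubert conditions must be imposed simultaneously on a single new flag $\FF^p_\bullet$ (one from $\SSS$, one from $\SSS'$), whereas Lemma \ref{RepLemma} handles only one filtered subbundle. I would proceed in two sub-steps. In the first, apply the $\tilde{\alpha}$ version of Lemma \ref{RepLemma} with $Z = \DD^{(p-1)}$, $\PP = V\otimes\OO$, $\Sigma = \SSS$, the complete filtration $\FF^{\SSS,p}_\bullet$, and $\ell_a = K^p_a$. By Remark \ref{DimensionRemark} this produces an intermediate scheme $\Upsilon_p$ smooth, irreducible, and surjective over $\DD^{(p-1)}$ of relative dimension $\dim\textnormal{Fl}(k^r) - |\lambda(K^p)| - \dim\textnormal{Fl}(k^f)$, carrying a universal flag $\FF^p_\bullet$ satisfying the $\SSS$-Schubert condition.

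In the second sub-step, the $\SSS'$-Schubert condition on $\FF^p_\bullet$ is translated into a Schubert compatibility in the quotient $\overline{V} := V/\SSS$: the flag $\FF^p_\bullet$ induces (upon collapsing the stationary indices at $K^p$) a complete flag $\overline{\FF}^p_\bullet$ on $\overline{V}$, and the subbundle $\overline{\SSS'} := \SSS'/\TT$ of $\overline{V}$ inherits a filtration from $\FF^{\SSS',p}_\bullet$. The $\SSS'$-condition on $\FF^p_\bullet$ is then equivalent to a Schubert compatibility of $\overline{\SSS'}$ with $\overline{\FF}^p_\bullet$, cutting out a locally closed subscheme $\DD^{(p)} \subseteq \Upsilon_p$. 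Unlike the first sub-step this constrains an existing flag rather than introducing a new one, so smoothness and irreducibility must be verified by a tangent-space argument analogous to Proposition \ref{TangentCondition}; nonemptiness of fibers (hence surjectivity) follows by an explicit step-by-step construction of a compatible $\FF^p_\bullet$ over any base point.

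The main obstacle will be the careful dimension count, where one must show the relative codimension of $\DD^{(p)}$ in $\Upsilon_p$ equals $|\lambda(K^p)| + \dim\textnormal{Fl}(k^f) - |\lambda(J^p)| - \dim\textnormal{Fl}(k^g)$, the negative correction reflecting the shared $\TT$-flag between $\SSS$ and $\SSS'$ already encoded in $C'$. Summing the per-$p$ contributions and combining with the relative dimension of $C'$ over $A_{f,f,g}$ then yields the claimed total. I expect this bookkeeping, together with verifying that the $\SSS$ and $\SSS'$ conditions interact only through the $\TT$-data, to be the most intricate portion of the proof.
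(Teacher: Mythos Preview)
Your tower strategy matches the paper's, but the paper handles the step from $C'$ to $\UnivPairedInt{K}{J}$ by a \emph{single} application of Lemma \ref{RepLemma} per index $p$, not two sub-steps. The observation you are missing is that the pair of conditions on $\FF^{\SSS,p}_b$ and $\FF^{\SSS',p}_b$ combines into one condition on their sum: over $C'$ the bundle $\Sigma=\SSS+\SSS'$ carries the filtration $\Sigma_b=\FF^{\SSS,p}_b+\FF^{\SSS',p}_b$ (of rank $2b-a$ when $N^p_a\le b<N^p_{a+1}$), and both maps
\[
\FF^{\SSS,p}_b\to V\otimes\OO_Y/\FF^p_{K^p_b}\quad\text{and}\quad\FF^{\SSS',p}_b\to V\otimes\OO_Y/\FF^p_{K^p_b}
\]
vanish if and only if $\Sigma_b\to V\otimes\OO_Y/\FF^p_{K^p_b}$ does. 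One then applies Lemma \ref{RepLemma} with this single (non-complete) filtration and $\ell_b=K^p_b$; formula (\ref{GeneralRelDim}) gives the relative dimension over $C'$ directly, and $\DD$ is the open subfunctor of the resulting $\tilde\DD$.

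Your second sub-step, by contrast, has a genuine gap. Once $\FF^p_\bullet$ is already carried on $\Upsilon_p$, the $\SSS'$-condition is not an instance of Lemma \ref{RepLemma}, and your proposed translation to $\overline V=V/\SSS$ is not a clean equivalence: one computes that the intersection of $\overline{\SSS'}$ with the image of $\FF^p_t$ in $\overline V$ equals $\big((\SSS+\SSS')\cap\FF^p_t\big)\big/(\SSS\cap\FF^p_t)$, and in general $(\SSS+\SSS')\cap\FF^p_t$ strictly contains $(\SSS\cap\FF^p_t)+(\SSS'\cap\FF^p_t)$, so the Schubert position of $\overline{\SSS'}$ in $\overline V$ does not recover the Schubert position of $\SSS'$ in $V$. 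The tangent-space argument you allude to could perhaps be completed, but it would amount to reproving the smooth-fiber content of the lemma in a harder setting; the combined-filtration trick bypasses this entirely and reduces the dimension bookkeeping you flag as ``most intricate'' to a direct evaluation of (\ref{GeneralRelDim}).
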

\begin{proof}
It suffices to prove the assertions above for the functor $\tilde{\DD}$ which associates to $Y$ the same data as $\DD$, with the milder condition that for $b=0,...,f$, the maps 
\[\FF_b^{\SSS,p}\rightarrow V\otimes\OO_Y/\FF^p_{K^p_b} \textnormal{ and } \FF_b^{\SSS',p}\rightarrow V\otimes\OO_Y/\FF^p_{K^p_b}\]
are $0$, for then $\DD$ is an open subfunctor of $\tilde{\DD}$.

The conditions of $\CC'$ imply that for each scheme $Y$ over $C'$, $a=0,...,g$, and $p=1,...,s$, we have a canonical inclusion of bundles $\FF_a^{\TT,p}\rightarrow\FF_b^{\SSS,p}\oplus\FF_b^{\SSS',p}$ whenever $N^p_a\leq b <N^p_{a+1}$. The cokernel of the inclusion is a bundle of rank $2b-a$, which we denote $\FF_b^{\SSS,p}+\FF_b^{\SSS',p}$. We may also realize this sum as the image of the map $\FF_b^{\SSS,p}\oplus\FF_b^{\SSS',p}\rightarrow V\otimes\OO_Y$, so it is naturally a subbundle of $V\otimes\OO_Y$. The maps 
\[\FF_b^{\SSS,p}\rightarrow V\otimes\OO_Y/\FF^p_{K^p_b} \textnormal{ and } \FF_b^{\SSS',p}\rightarrow V\otimes\OO_Y/\FF^p_{K^p_b}\]
are both zero if and only if 
\[\FF_b^{\SSS,p}+\FF_b^{\SSS',p}\rightarrow V\otimes\OO_Y/\FF^p_{K^p_b}\]
is zero. For each $p=1,...,s$, we have a filtration of $\Sigma=\SSS+\SSS'$ by $f$-many subbundles $\Sigma_b=\FF_b^{\SSS,p}+\FF_b^{\SSS',p}$. Applying Lemma \ref{RepLemma} $s$-many times, once for each such filtration, we see that $\tilde{\DD}$ is representable by an irreducible scheme $\tilde{D}$, which is surjective and smooth over $C'$. A computation involving the dimension count in \ref{RepLemma} gives the relative dimension of $\tilde{D}$ over $C'$ to be:
\begin{equation*}
\dim\Flags{V}-2\dim\Flags{k^f}-2(\sum_{p=1}^s|\lambda(K^p)|)+\dim\Flags{k^g}+ \sum_{p=1}^s|\lambda(J^p)|.
\end{equation*}
Combining this with the relative dimension of $C'$ over $B$ and the relative dimension of $B$ over $A_{f,f,g}$, one obtains the proposed number. 
\end{proof}

Let $\II$ and $\LL$ be as in Section \ref{Parameter Spaces}. We would like $\UnivPairedHom{\II}{\KK}{\JJ}$ to be a scheme over $\UnivPairedInt{K}{J}$ whose fiber over $(S,S',T,\FF)$ is the set of quadruples $(\GG,\GG',\phi,\phi')$ where $\GG,\GG'\in\Flags{Q}$ and $\phi\in\Hom{I}{V}{Q}{F}{G}$, $\phi'\in\Hom{I}{V}{Q}{F}{G'}$ are such that $\ker\phi=S$, $\ker\phi'=S'$. Such a scheme exists provided the functor $\EE$ is representable, where $\EE$ associates to $Y$ over $\UnivPairedInt{K}{J}$ the same data as $\DD$ above, with the additional data of $2s$-many complete filtrations $\{\GG^p_\bullet\}_{p=1}^s$, $\{\GG'^p_\bullet\}_{p=1}^s$ by subbundles of $Q\otimes\OO_Y$ and two homomorphisms of vector bundles $\phi,\phi':V\otimes\OO_Y\rightarrow Q\otimes\OO_Y$ subject to the conditions $\ker\phi=\SSS$, $\ker\phi=\SSS'$, and the composite maps 
\begin{equation*}
\FF^p_a\rightarrow V\otimes\OO_Y\xrightarrow{\phi}Q\otimes\OO_Y\rightarrow Q\otimes\OO_Y/\GG^p_{I^p_a-a}
\end{equation*}
\begin{equation*}
\FF^p_a\rightarrow V\otimes\OO_Y\xrightarrow{\phi'}Q\otimes\OO_Y\rightarrow Q\otimes\OO_Y/\GG'^p_{I^p_a-a}
\end{equation*}
are zero for $a=1,...,r$, $p=1,...,s$.

\begin{prop}\label{UnivHom is Representable}
Let $\EE:\textnormal{Sch}/\textnormal{\textbf{U}}_{\KK,\JJ}\rightarrow\textnormal{Set}$ be as above. Then $\EE$ is represented by a scheme $\textnormal{\textbf{H}}_{\II,\KK,\JJ}$ which is irreducible, smooth, and surjective over $\textnormal{\textbf{U}}_{\KK,\JJ}$ of relative dimension
\[2(r-f)(n-r)+2\dim\Flags{Q}+2\sum_{p=1}^{s}(|\lambda(L^p)|-|\lambda(I^p)|-|\lambda(K^p)|).\]
\end{prop}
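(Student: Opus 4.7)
The plan is to build $\UnivPairedHom{I}{K}{J}$ by base change from the already constructed $\UnivHom{I}{K} \to \UnivInt{K}$ of Lemma \ref{ModuliDimension}(2), avoiding any need to re-run the tower argument from scratch. The key observation is that the conditions imposed by the functor $\EE$ split cleanly into two entirely independent parts: the pair $(\GG,\phi)$ is constrained only by $\ker\phi = \SSS$ together with the vanishing of $\FF^p_a \to Q\otimes\OO_Y/\GG^p_{I^p_a-a}$, while $(\GG',\phi')$ satisfies identical conditions with $\SSS'$ and $\GG'$ in place of $\SSS$ and $\GG$. Thus on the level of functors one has $\EE = \EE_1 \times_{\UnivPairedInt{K}{J}} \EE_2$, where $\EE_i$ parameterizes only the $i$-th pair.

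The base $\UnivPairedInt{K}{J}$ carries two forgetful classifying morphisms $\alpha,\beta : \UnivPairedInt{K}{J} \to \UnivInt{K}$ sending $(S,S',T,\FF)$ to $(S,\FF)$ and $(S',\FF)$ respectively (each well-defined because $S,S'\in\cell_\KK(\FF)$). Pulling back $\UnivHom{I}{K}\to\UnivInt{K}$ along $\alpha$ gives a scheme
\begin{equation*}
\mathbf{H}^1 := \UnivPairedInt{K}{J}\times_{\UnivInt{K},\alpha}\UnivHom{I}{K}
\end{equation*}
representing $\EE_1$; the symmetric pullback along $\beta$ yields $\mathbf{H}^2$ representing $\EE_2$. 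Since $\UnivHom{I}{K}\to\UnivInt{K}$ is surjective, smooth, and irreducible over $\UnivInt{K}$ (Lemma \ref{ModuliDimension}(2)), the base-change property of Remark \ref{BaseChangeRemark} together with the standard preservation of smoothness, surjectivity, and relative dimension under base change shows that each $\mathbf{H}^i\to\UnivPairedInt{K}{J}$ is surjective, smooth, and irreducible over $\UnivPairedInt{K}{J}$ of the same relative dimension as $\UnivHom{I}{K}/\UnivInt{K}$, namely
\begin{equation*}
(r-f)(n-r) + \dim\Flags{Q} + \sum_{p=1}^s\bigl(|\lambda(L^p)| - |\lambda(I^p)| - |\lambda(K^p)|\bigr).
\end{equation*}
In particular each $\mathbf{H}^i$ is itself irreducible, because $\UnivPairedInt{K}{J}$ is irreducible by Proposition \ref{UnivInt is Representable}.

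Finally, set $\UnivPairedHom{I}{K}{J} := \mathbf{H}^1\times_{\UnivPairedInt{K}{J}}\mathbf{H}^2$. This represents $\EE$ (representable functors are closed under fiber product); it is smooth and surjective over $\UnivPairedInt{K}{J}$ as a fiber product of two such morphisms; it is irreducible over $\UnivPairedInt{K}{J}$ by the very definition of ``irreducible over'' in Lemma \ref{ModuliDimension}, applied to $\mathbf{H}^2/\UnivPairedInt{K}{J}$ with $Z = \mathbf{H}^1$; and its relative dimension is the sum of the two factor dimensions, producing
\begin{equation*}
2(r-f)(n-r) + 2\dim\Flags{Q} + 2\sum_{p=1}^s\bigl(|\lambda(L^p)| - |\lambda(I^p)| - |\lambda(K^p)|\bigr),
\end{equation*}
exactly as claimed.

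Because all the heavy work was already carried out in constructing $\UnivHom{I}{K}$ in \cite{Bel06,Bel07}, the argument is essentially formal once one has the fibered splitting $\EE = \EE_1\times\EE_2$ and the forgetful morphisms $\alpha,\beta$. The only point requiring a moment's care is the verification that $\mathbf{H}^1$ really represents $\EE_1$: one must check that pulling the universal $(\GG,\phi)$ on $\UnivHom{I}{K}$ back along $\alpha$ reproduces precisely the data and Schubert conditions specified by $\EE_1$. This is immediate from the construction, since those conditions involve only the subbundle $\SSS$ and the flags $\FF$---the very data classified by $\alpha$---and the extra data $\SSS',\TT,\FF_{S'},\FF_T$ on $\UnivPairedInt{K}{J}$ plays no role in $\EE_1$.
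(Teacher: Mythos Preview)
Your proof is correct. The paper's own proof is a one-line reference (``Virtually identical to \cite{Bel06} Lemma A.5''), which amounts to re-running the tower construction of Lemma \ref{RepLemma} from scratch for the paired functor $\EE$. Your approach is different and more economical: you recognize that the conditions defining $\EE$ split into two independent halves, each of which is already governed by the previously constructed $\UnivHom{I}{K}\to\UnivInt{K}$, so the representing scheme is simply a fiber product of two base-changes of that known object. This saves repeating the entire tower argument and makes the dimension count immediate (twice the single-factor relative dimension from Lemma \ref{ModuliDimension}(2), which you correctly rewrite using $|\lambda(L^p)|-|\lambda(K^p)|=\sum_a(n-r+K^p_a-I^p_{K^p_a})$).

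One small point of hygiene: your appeal to Remark \ref{BaseChangeRemark} is slightly off-target, since that remark is stated for the specific schemes $\mathbf{A}$ of Lemma \ref{RepLemma}, not for $\UnivHom{I}{K}$. What you actually need is that the property ``irreducible over $Y$'' is preserved under base change along any map $Y'\to Y$ of irreducible schemes; this follows directly from the definition in Lemma \ref{ModuliDimension} (since $(X\times_Y Y')\times_{Y'}Z'=X\times_Y Z'$ for any irreducible $Z'/Y'$). With that remark in place, your argument that $\mathbf{H}^2$ is irreducible over $\UnivPairedInt{K}{J}$ and hence $\mathbf{H}^1\times_{\UnivPairedInt{K}{J}}\mathbf{H}^2$ is irreducible goes through cleanly.
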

\begin{proof}
Virtually identical to \cite{Bel06} Lemma A.5.
\end{proof}

\end{section}

\begin{section}{A Short Proof of the Geometric Horn Conjecture}\label{HornAppendix}
Belkale \cite{Bel06} proved, in a precise sense, that given an element $(\FF,\GG)\in\Flags{M}\times\Flags{Q}$, which is chosen sufficiently generally, the induced flags on $\ker\phi$ and $M/\ker\phi$ are themselves mutually sufficiently general, where $\phi$ is a general element in the space $\Hom{H}{M}{Q}{F}{G}$. Here ``sufficiently general'' means roughly that the flags are general enough to perform intersection theoretic computations and to calculate the dimensions of the vector spaces $\Hom{H}{M}{Q}{F}{G}$. To prove the Horn conjecture, we will require a similar result on the mutual genericity of $(\FF(\ker\phi),\GG)$. We begin by reiterating the results of Belkale.

Given $M$ of dimension $m$, there are only finitely many $s$-tuples $\HH$ of index sets in $[m]$, so by intersecting finitely many Zariski open subsets of $\Flags{M}$, we obtain an open subset $\beta$ with the property that each $s$-tuple of flags $\FF$ in $\beta$ is such that for any $s$-tuple of index sets $\HH$ in $[m]$, each of the same cardinality, the Schubert intersection $\Omega_\HH(\FF)$ is proper and transverse at every point. An element $\FF$ of $\beta$ is said to be \textit{generic for intersection theory}.

Similarly, given $\HH$, there is by Lemma \ref{GeneralConfiguration} a nonempty, largest open subset $O(M,Q,\HH)$ of $\Flags{M}\times\Flags{Q}$ such that $\textnormal{hd}_\HH$ is constant on this open set, say $\textnormal{hd}_\HH=(D,e,\EE)$ and such that $\UnivHom{H}{E}\rightarrow\Flags{M}\times\Flags{Q}$ is flat and surjective after base change to $O(M,Q,\HH)$.

Belkale defined a simultaneous choice of open subset $A_{M,Q}\subseteq\Flags{M}\times\Flags{Q}$ for every pair of nonzero vector spaces $M$ and $Q$ such that:

\begin{itemize}
\item The choice is ``functorial for isomorphisms,'' in the sense that if $M\rightarrow M'$, $Q\rightarrow Q'$ are vector spaces isomorphisms, then the induced isomorphism of varieties $\Flags{M}\times\Flags{Q}$ to $\Flags{M'}\times\Flags{Q'}$ sends $A_{M,Q}$ isomorphically onto $A_{M',Q'}$. In particular, $A_{M,Q}$ is stable under the diagonal action of $\textnormal{GL}(M)\times\textnormal{GL}(Q)$ on $\Flags{M}\times\Flags{Q}$.
\item Fixing $M$ and $Q$, if $(\FF,\GG)\in A_{M,Q}$, then
\begin{itemize}
\item[G1.] $\FF$ and $\GG$ are generic for intersection theory in $M$ and $Q$ respectively.
\item[G2.] For any choice of $\HH\in\binom{[n-r+m]}{m}^s$, we have $(\FF,\GG)\in O(M,Q,\HH)$.
\item[G3.] For $\HH$ as in G2, there is a nonempty open subset of $\Hom{H}{M}{Q}{F}{G}$ such that for each $\phi$ in this open subset, if $0<\dim\ker\phi<m$, then $(\FF(\ker\phi),\FF(M/\ker\phi))$ lies in $A_{\ker\phi,M/\ker\phi}$.
\end{itemize}
\end{itemize}
Let $B_{M,Q}$ denote the subset of $A_{M,Q}$ with the additional property:
\begin{itemize}
\item[G4.] Same as G3, except that we require $(\FF(\ker\phi),\GG)$ to lie in $B_{\ker\phi,Q}$.
\end{itemize}
\begin{prop}\label{BMQExists}
For a fixed vector space $Q\neq0$, there is a collection of nonempty open subsets $\{B_{M,Q}\subseteq\Flags{M}\times\Flags{Q}\}_{M\neq0}$ with the properties G1 through G4.
\end{prop}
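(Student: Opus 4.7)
The plan is to proceed by induction on $m=\dim M$, with $Q$ held fixed throughout. The base case $m=1$ is trivial: condition G4 is vacuous since there are no subspaces $R\subseteq M$ with $0<\dim R<1$, so set $B_{M,Q}:=A_{M,Q}$.

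For the inductive step, assume $B_{M',Q}$ has been defined for every nonzero $M'$ of dimension strictly less than $m$. Fix $M$ of dimension $m$. Since we will take $B_{M,Q}\subseteq A_{M,Q}$, properties G1--G3 are automatic, and only G4 needs arranging. There are only finitely many $\HH\in\binom{[n-r+m]}{m}^s$, so it suffices, for each such $\HH$ whose generic kernel dimension $e=e(\HH)$ satisfies $0<e<m$ (the cases $e\in\{0,m\}$ make the hypothesis ``$0<\dim\ker\phi<m$'' fail on a dense open subset of the Hom space), to construct an open dense subset $V_\HH\subseteq A_{M,Q}$ on which the conclusion of G4 holds for that particular $\HH$; then $B_{M,Q}:=\bigcap_\HH V_\HH$ does the job.

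Fix such an $\HH$, with generic kernel Schubert position $\EE=\EE(\HH)$. By the inductive hypothesis and the stability of each $B_{R,Q}$ under isomorphisms of $R$, the family $\{B_{R,Q}\}_{R\in\textnormal{Gr}(e,M)}$ assembles into a nonempty open subset $\mathcal{B}$ of the $\textnormal{GL}(M)$-equivariant fiber bundle $\mathcal{P}\to\textnormal{Gr}(e,M)$ whose fiber over $R$ is $\Flags{R}\times\Flags{Q}$ (built from the tautological $R$-subbundle on $\textnormal{Gr}(e,M)$). Let $\mathcal{U}$ denote the restriction of $\UnivHom{H}{E}(M,Q)$ to $A_{M,Q}\cap O(M,Q,\HH)$; by property G2 and Lemma \ref{ModuliDimension} it is flat, surjective, and irreducible over its base. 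There is a natural morphism $\Psi:\mathcal{U}\to\mathcal{P}$ sending $(\FF,\GG,\phi)$ to $(\ker\phi,\FF(\ker\phi),\GG)$. Define $V_\HH$ to be the image in $A_{M,Q}$ of the open set $\Psi^{-1}(\mathcal{B})\subseteq\mathcal{U}$. Flatness (hence openness) of the map $\mathcal{U}\to A_{M,Q}\cap O(M,Q,\HH)$ makes $V_\HH$ open, and irreducibility of $A_{M,Q}$ makes it dense as soon as it is nonempty.

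The main technical obstacle is to verify that $V_\HH\neq\emptyset$, equivalently that $\Psi$ has dense image in $\mathcal{P}$. The idea is to exploit $\textnormal{GL}(M)\times\textnormal{GL}(Q)$-equivariance: since $\textnormal{GL}(M)$ acts transitively on $\textnormal{Gr}(e,M)$ and covers the action on $\mathcal{P}$, it suffices to exhibit, for one fixed $R$ and a generic $(\tilde\FF,\GG)\in\Flags{R}\times\Flags{Q}$, some $\FF\in\Flags{M}$ with $R\in\cell_\EE(\FF)$ inducing $\tilde\FF$ on $R$, together with a $\phi\in\Hom{H}{M}{Q}{F}{G}$ having kernel exactly $R$. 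The existence of such an $\FF$ follows from the surjectivity and smoothness of $\UnivInt{E}(M)\to\Flags{M}$ of Lemma \ref{ModuliDimension}(1) combined with a fibration argument showing that the induced-flag map $\{\FF:R\in\cell_\EE(\FF)\}\to\Flags{R}$ is surjective; the existence of $\phi$ is built into the definition of $\UnivHom{H}{E}$; and a sufficiently general choice of $(\FF,\GG)$ automatically lies in the open set $A_{M,Q}\cap O(M,Q,\HH)$. Assembling these observations gives $\Psi(\mathcal{U})\cap\mathcal{B}\neq\emptyset$ and completes the induction.
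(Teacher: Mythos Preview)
Your overall strategy---induction on $\dim M$, taking $B_{M,Q}$ as a finite intersection inside $A_{M,Q}$, and reducing nonemptiness to the dominance of the map $\UnivHom{H}{E}|_R\to\Flags{R}\times\Flags{Q}$---matches the paper's (which in turn follows Belkale). The trouble is in your nonemptiness argument. You fix a generic $(\tilde\FF,\GG)$, then produce $\FF$ inducing $\tilde\FF$ via the surjectivity of $\{\FF:R\in\cell_\EE(\FF)\}\to\Flags{R}$, and finally claim that a suitable $\phi$ ``is built into the definition of $\UnivHom{H}{E}$''. But the surjectivity of $\UnivHom{H}{E}\to\UnivInt{E}$ only says that over $(R,\FF)$ there exists \emph{some} pair $(\GG',\phi)$; it does not give a $\phi\in\Hom{H}{M}{Q}{F}{G}$ with $\ker\phi=R$ for the \emph{already-fixed} $\GG$. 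And since your $\FF$ lives in the proper subvariety $\{\FF:R\in\cell_\EE(\FF)\}\subsetneq\Flags{M}$, you cannot fall back on the generic $\textnormal{hd}_\HH$ data over $\Flags{M}\times\Flags{Q}$ either. You have decoupled the construction of $\FF$ from that of $\phi$ in a way that loses compatibility with $\GG$. (Minor aside: Lemma~\ref{ModuliDimension}(1) asserts smoothness and surjectivity of $\UnivInt{E}(M)$ over $\textnormal{Gr}(e,M)$, not over $\Flags{M}$.)

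The paper closes this gap by reversing the order of choices. First pick a generic $\GG$ in the image of $\UnivHom{H}{E}|_R\to\Flags{Q}$ and some $(\FF_0,\phi)$ in the fiber over it. Then act by $G^{\times s}$, where $G\subset\textnormal{GL}(M)$ is the subgroup stabilizing $R$ and acting trivially on $M/R$. Because $\phi$ factors through $M/R$, one has $\phi\in\textnormal{Hom}_\HH(M,Q,\vec{g}\FF_0,\GG)$ for every $\vec{g}\in G^{\times s}$, so $(\vec{g}\FF_0,\GG,\phi)$ remains in $\UnivHom{H}{E}|_R$; and since $G$ surjects onto $\textnormal{GL}(R)$, the induced flags $(\vec{g}\FF_0)(R)$ sweep out all of $\Flags{R}$. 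This group action is exactly the glue your argument is missing between the existence of $\FF$ and the existence of $\phi$.
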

\begin{proof}
The proof resembles that of Section 8.2 in \cite{Bel06}. A slight difference occurs in the ``Nonemptiness'' section. In Belkale's proof, he shows that fixing a proper nonzero subspace $R$ of $M$, the map $\UnivHom{H}{E}|_R\rightarrow\Flags{R}\times\Flags{M/R}$ is dominant (for particularly chosen $\HH,\EE$). We must show instead that the map $\UnivHom{H}{E}|_R\rightarrow\Flags{R}\times\Flags{Q}$ is dominant. To do this, take $\GG$ in the open image of $\UnivHom{H}{E}|_R\rightarrow\Flags{Q}$, and consider the map:
\begin{equation}\label{BMQOne}
\UnivHom{H}{E}|_{(\phi,\GG)}\rightarrow\Flags{R},
\end{equation}
for some $\phi$ with $\ker\phi=R$ such that the set of $\FF$ with $(\phi,\FF,\GG)\in\UnivHom{H}{E}$ is nonempty. It suffices to prove that (\ref{BMQOne}) is dominant. This is seen by considering the actions of $G^{\times s}$ on both the domain and codomain of (\ref{BMQOne}), where $G$ is the largest subgroup of $\textnormal{GL}(M)$ which acts on $R$ and acts trivially on $M/R$. The action of $G^{\times s}$ on $\Flags{R}$ is transitive, and (\ref{BMQOne}) is equivariant, so dominance (in fact surjectivity) follows. 
\end{proof}

Following an idea sketched by Belkale in discussions, we now use Proposition \ref{H1Prop} to simplify his published proof of the Horn conjecture \cite{Bel06}. 

\begin{thm}[Geometric Horn]\label{GeometricHorn}
Let $\II\in\binom{[n]}{r}^s$ be arbitrary. The following are equivalent:
\begin{itemize}
\item[A.] The product $\omega_\II$ is nonzero in $H^*(\textnormal{Gr}(r,n))$
\item[B.] For all $0<f\leq r$ and all $\KK\in\binom{[r]}{f}^s$ such that $\omega_\KK$ is nonzero in $H^*(\textnormal{Gr}(f,r))$, one has the inequality:
\begin{equation}\label{DaggerIK}\tag{$\dagger^{\II}_{\KK}$}
\sum_{p=1}^s\sum_{a=1}^f (n-r+K^p_a-I^p_{K^p_a})-f(n-r)\leq 0.
\end{equation}
\end{itemize}
\end{thm}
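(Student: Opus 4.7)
The plan is to prove both implications by induction on $r$, using Proposition \ref{H1Prop} (together with the genericity package $B_{V,Q}$ from Proposition \ref{BMQExists}) as the main tool and reducing to smaller ambient spaces $R \subsetneq V$.

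For $(\text{A}) \Rightarrow (\text{B})$: Fix generic $(\FF,\GG) \in B_{V,Q}$. Then by Proposition \ref{TangentCondition}, $\omega_\II \ne 0$ forces $\dim \textnormal{Hom}_\II(V,Q,\FF,\GG)$ to equal its Euler characteristic, so $h^1$ of the complex $\CC = \AAA(V,Q,\FF,\GG,\vartheta(\II))$ vanishes. Given $\KK$ with $\omega_\KK \ne 0$, pick an $f$-dimensional $S \subset V$ in Schubert position $\KK$ with respect to $\FF$, which exists by genericity of $\FF$. The restriction $\rho\colon \textnormal{Hom}(V,Q) \to \textnormal{Hom}(S,Q)$ induces a surjection of two-step complexes $\CC \twoheadrightarrow \CC(S) := \AAA(S,Q,\FF(S),\GG,\vartheta(\tilde{\II}))$ with $\tilde I^p_a = I^p_{K^p_a} - K^p_a + a$, and hence a surjection $H^1(\CC) \twoheadrightarrow H^1(\CC(S))$ by the Snake Lemma. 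So $H^1(\CC(S)) = 0$, whence $\chi(\CC(S)) = h^0(\CC(S)) \ge 0$; a direct unwinding of the Euler characteristic formula shows this is precisely $(\dagger^\II_\KK)$.

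For $(\text{B}) \Rightarrow (\text{A})$: Assume all inequalities hold and take generic $(\FF,\GG) \in B_{V,Q}$. The goal is $H^1(\CC) = 0$, which by Proposition \ref{TangentCondition} gives $\omega_\II \ne 0$. Let $\phi$ be a general element of $\textnormal{Hom}_\II(V,Q,\FF,\GG)$. If $\phi = 0$ generically, then $h^0 = 0$ and $\chi \ge 0$ (this is $(\dagger^\II_{([r])^s})$) forces $h^1 = 0$. Otherwise set $R = \ker \phi$ of dimension $e < r$ in Schubert position $\EE$ (so $\omega_\EE \ne 0$ in $H^*(\textnormal{Gr}(e,r))$). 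By Proposition \ref{H1Prop},
\[
H^1(\CC) \;\cong\; H^1(\AAA(R,Q,\FF(R),\GG,\vartheta(\YY)))
\]
with $Y^p_a = I^p_{E^p_a} - E^p_a + a$, and by property G4 of $B_{V,Q}$, the induced pair $(\FF(R),\GG)$ lies in $B_{R,Q}$. So by induction on $r$ it suffices to prove $\omega_\YY \ne 0$ in $H^*(\textnormal{Gr}(e,n-r+e))$; and by the induction again (now invoking $(\text{B}) \Rightarrow (\text{A})$ at the level of $R$), this reduces to verifying $(\dagger^\YY_\MM)$ for every $\MM$ with $\omega_\MM \ne 0$ in $H^*(\textnormal{Gr}(g,e))$. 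For any such $\MM$, standard Schubert calculus (iterating Kleiman transversality through the tower $\textnormal{Gr}(g,e) \subset \textnormal{Gr}(g,r)$ via a general $R \in \Omega_\EE$) produces an $s$-tuple $\LL \in \binom{[r]}{g}^s$, with $L^p_a = E^p_{M^p_a}$, such that $\omega_\LL \ne 0$ in $H^*(\textnormal{Gr}(g,r))$; and a direct shift-and-substitute calculation shows $(\dagger^\YY_\MM)$ is identical to $(\dagger^\II_\LL)$, which holds by hypothesis (B).

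The main obstacle I expect is the bookkeeping that makes the identity $(\dagger^\YY_\MM) = (\dagger^\II_\LL)$ transparent: the shifts built into $\YY$ and the nested positions $\MM \hookrightarrow \EE \hookrightarrow \II$ must line up exactly for the ``descent'' step to cleanly translate a subspace inequality on $R$ into one on $V$. A secondary technical point is confirming that the generic $\phi$ of rank $r - e$ really does produce a kernel whose induced flags lie in $B_{R,Q}$ rather than merely in $A_{R,Q}$, so that the induction can iterate; this is exactly the content of condition G4 and the existence of the family $\{B_{M,Q}\}$ in Proposition \ref{BMQExists}.
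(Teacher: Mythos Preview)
Your proposal is correct and, for the direction $(\text{B})\Rightarrow(\text{A})$, follows essentially the same route as the paper: pass to the kernel of a general $\phi$, invoke Proposition~\ref{H1Prop} to identify the two $H^1$'s, use G4 to land $(\FF(R),\GG)$ back in $B_{R,Q}$, and verify the induced inequalities by noting that a subspace $T\subset R$ in position $\MM$ with respect to $\FF(R)$ sits in position $\EE_\MM$ with respect to $\FF$ (so $\omega_{\EE_\MM}\neq 0$ since $\FF$ is generic). The paper makes this last step explicit rather than packaging it as ``iterating Kleiman transversality,'' but the content is identical, and your identity $(\dagger^\YY_\MM)=(\dagger^\II_\LL)$ is exactly the paper's displayed chain of equalities.

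For $(\text{A})\Rightarrow(\text{B})$, however, you take a genuinely different route. The paper argues externally in $\textnormal{Gr}(f,W)$: it embeds $\Omega_\KK(\EE(V))\subseteq\Omega_\LL(\EE)$ for $L^p_a=I^p_{K^p_a}$ and a generic flag $\EE$ on the ambient $n$-dimensional $W$, then compares the lower bound on $\dim\Omega_\KK(\EE(V))$ with the transverse dimension of $\Omega_\LL(\EE)$. You instead stay entirely inside the two-step complex framework: from $\omega_\II\neq 0$ and Proposition~\ref{TangentCondition} you get $H^1(\CC)=0$; then for \emph{any} $S$ in position $\KK$ the restriction surjection $\CC\twoheadrightarrow\CC(S)$ forces $H^1(\CC(S))=0$ via the Snake Lemma, whence $\chi(\CC(S))\geq 0$, which unwinds to $(\dagger^\II_\KK)$. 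Your argument is more internal to the paper's machinery and arguably cleaner (no ambient $W$ needed); the paper's has the virtue of being elementary Schubert calculus, independent of Propositions~\ref{TangentCondition} and~\ref{H1Prop}.
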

\begin{proof}
(A)$\Rightarrow$(B). Assume (A). Let $\EE$ be a general complete flag in the $n$-dimensional space $W$. The intersection $\cell_\II(\EE)$ is nonempty, say $V\in\cell_\II(\EE)$. Note that the intersection of cells is dense in the intersection of Schubert varieties for $\EE$ generic (see \cite{Bel06} Proposition 1.1 for a proof). Let $\KK$ as in (B) be such that $\omega_\KK$ is nonzero. Then, the intersection of Schubert varieties $\Omega_\KK(\EE(V))\subseteq\textnormal{Gr}(f,V)$ is nonempty of dimension at least:
\begin{equation}\label{LHSStar}
\dim\textnormal{Gr}(f,V)-\sum_{p=1}^s\textnormal{codim}(\Omega_{K^p}(E^p(V)))=f(r-f)-\sum_{p=1}^s\sum_{a=1}^{f}(r-f+a-K^p_a)
\end{equation}
Under the inclusion $\textnormal{Gr}(f,V)\hookrightarrow\textnormal{Gr}(f,W)$, we have:
\begin{equation}\label{Star}
\Omega_\KK(\EE(V))\subseteq\Omega_\LL(\EE),
\end{equation}
where $L^p_a=I^p_{K^p_a}$. Since $\EE$ is generic, the intersection $\Omega_\LL(\EE)$ has by Kleiman transversality its expected dimension:
\begin{equation}\label{RHSStar}
f(n-f)-\sum_{p=1}^s\sum_{a=1}^f (n-f+a-I^p_{K^p_a}).
\end{equation}
By (\ref{Star}), the quantity in (\ref{LHSStar}) is less than or equal to the quantity in (\ref{RHSStar}). This rearranges to the inequality of (B)
\end{proof}

\begin{proof}
(B)$\Rightarrow$(A). Assume (B) and furthermore assume that we have proven for all $r'<r$ and $\tilde{\II}$ in $\binom{[n-r+r']}{r'}^s$ that $\omega_{\tilde{\II}}\neq0$ if for all $0<g\leq r'$ and all $\JJ\in\binom{[r']}{g}^s$ with $\omega_\JJ$ nonzero in $H^*(\textnormal{Gr}(g,r'))$ the inequality
\begin{equation*}
\sum_{p=1}^{s}\sum_{a=1}^{g}(n-r+J^p_a-\tilde{I}^p_{J^p_a})-g(n-r)\leq 0
\end{equation*}
holds. The base case when $r'=1$ is easy. For example, one may use the simple description of the cohomology ring $H^*(\textnormal{Gr}(1,N+1))$ as $\mathbb{Z}[h]/h^{N+1}$ where $h$ is the class of a hyperplane in $\textnormal{Gr}(1,N+1)=\mathbb{P}^N$.

In the general case, let $(\FF,\GG)\in B_{V,Q}$ (see Proposition \ref{BMQExists}), and let $S$ be the kernel of a general element of $\Hom{I}{V}{Q}{F}{G}$. Suppose that $S$ has dimension $f$ and lies in Schubert position $\KK$ with respect to $\FF$. By Proposition \ref{H1Prop}, we have
\begin{equation}\label{H1ForHorn}
h^1(\AAA(V,Q,\FF,\GG,\vartheta(\II)))=h^1(\AAA(S,Q,\FF(S),\GG,\vartheta(\tilde{\II}))),
\end{equation}
where $\tilde{I}^p_a=I^p_{K^p_a}-K^p_a+a$. An easy consequence of Proposition \ref{TangentCondition} is that (A) holds if and only if the left hand side of (\ref{H1ForHorn}) is zero. So it suffices to show the right hand side of (\ref{H1ForHorn}) is zero. Clearly this holds if $f=\dim S=0$.

Assume now that $f>0$. Let $0<g\leq f$  and $\JJ\in\binom{[f]}{g}^s$ be such that the product $\omega_\JJ$ is nonzero in $H^*(\textnormal{Gr}(g,S))$. Let $T\in\textnormal{Gr}(g,S)$ lie in the nonempty intersection $\cell_\JJ(\FF(S))$; here we use the fact that $(\FF,\GG)\in B_{V,Q}$ and hence $(\FF(S),\GG)\in B_{S,Q}$ is generic. One also has that $T\in\cell_{\KK_\JJ}(\FF)\subseteq\textnormal{Gr}(g,V)$. Since $\FF$ is generic, the product $\omega_{\KK_\JJ}$ is nonzero in $H^*(\textnormal{Gr}(g,r))$. The hypothesis (B) now gives the inequality
\begin{equation*}
0\geq\sum_{p=1}^s\sum_{a=1}^g (n-r+K^p_{J^p_a}-I^p_{K^p_{J^p_a}})-g(n-r) =\sum_{p=1}^s\sum_{a=1}^g (n-r+J^p_a-\tilde{I}^p_{J^p_a})-g(n-r)
\end{equation*}
This is precisely the inequality needed to apply the inductive hypothesis, so $\omega_{\tilde{\II}}\neq 0$ in $H^*(\textnormal{Gr}(f,n-r+f))$ By Proposition \ref{TangentCondition}, there is a maximal nonempty open locus $\OO$ in $\Flags{S}\times\Flags{Q}$ of points $(\FF',\GG')$ such that $h^1(\AAA(S,Q,\FF',\GG',\vartheta(\tilde{\II})))=0$. Note that $\OO$ contains $B_{S,Q}$ - in particular contains $(\FF(S),\GG)$ because $h^1$ is constant along $B_{S,Q}$. We conclude that right hand side of (\ref{H1ForHorn}) is $0$.
\end{proof}
\end{section}

\end{appendix}

\bibliographystyle{amsalpha}
\bibliography{C:/Users/dothetimmywalk/Desktop/Grad/UltimateBib/UltimateBib}{}
\end{document}